\theoremstyle{plain}
\newtheorem{theorem}{Theorem}[section]
\newtheorem{corollary}[theorem]{Corollary}
\newtheorem{lemma}[theorem]{Lemma}
\newtheorem{proposition}[theorem]{Proposition}
\newtheorem{condition}[theorem]{Condition}
\newtheorem{conditions}[theorem]{Conditions}
\theoremstyle{definition}
\newtheorem{definition}[theorem]{Definition}
\newtheorem*{unnumbered-definition}{Definition}
\theoremstyle{remark}
\newtheorem*{remark}{Remark}
\numberwithin{equation}{section}
\newcommand{\Capture}{\mathcal{I}}
\newcommand{\apy}{\sigma}
\newcommand{\boldface}{\bf}
\newcommand{\spacing}{\hspace{5mm}}
\newcommand{\eq}[1]{\stackrel{\textup{(#1)}}{=}}
\newcommand{\leql}[1]{\stackrel{\textup{(#1)}}{\leq}}
\newcommand{\iid}{\stackrel{\mathrm{iid}}{\sim}}
\newcommand{\before}{\unlhd}
\newcommand{\I}{\bigcap}
\newcommand{\U}{\bigcup}
\renewcommand{\emptyset}{\varnothing}
\DeclareMathOperator*{\argmax}{argmax}
\DeclareMathOperator*{\Cov}{Cov}
\newcommand{\less}{\smallsetminus}
\renewcommand{\t}{\mathtt{T}}
\DeclareMathOperator*{\support}{support}
\renewcommand{\tilde}{\widetilde}
\renewcommand{\bar}[1]{\mkern 4mu\overline{\mkern-5mu#1\mkern-3mu}\mkern 4mu}
\renewcommand{\a}{\alpha}
\renewcommand{\d}{\delta}
\renewcommand{\epsilon}{\varepsilon}
\renewcommand{\phi}{\varphi}
\newcommand{\new}{\nu}
\newcommand{\m}{\mu}
\newcommand{\A}{\mathcal{A}}
\newcommand{\G}{\mathcal{L}}
\newcommand{\M}{\mathcal{M}}
\newcommand{\X}{\mathcal{X}}
\newcommand{\Y}{\mathcal{Y}}
\DeclareSymbolFont{AMSb}{U}{msb}{m}{n}
\DeclareMathSymbol{\N}{\mathbin}{AMSb}{"4E}
\DeclareMathSymbol{\Z}{\mathbin}{AMSb}{"5A}
\DeclareMathSymbol{\R}{\mathbin}{AMSb}{"52}
\DeclareMathSymbol{\Q}{\mathbin}{AMSb}{"51}
\DeclareMathSymbol{\PP}{\mathbin}{AMSb}{"50}
\renewcommand{\P}{\PP}
\DeclareMathSymbol{\E}{\mathbin}{AMSb}{"45}
\begin{document}

\begin{frontmatter}

\title{Inconsistency of Pitman--Yor process mixtures for the number of components}
\runtitle{Inconsistency}


\begin{aug}
\author{\fnms{Jeffrey W.} \snm{Miller}\corref{}\thanksref{t1}\ead[label=e1]{Jeffrey\_Miller@Brown.edu}}
\and
\author{\fnms{Matthew T.} \snm{Harrison}\thanksref{t1}\ead[label=e2]{Matthew\_Harrison@Brown.edu}}

\thankstext{t1}{Supported in part by NSF grant DMS-1007593 and DARPA contract FA8650-11-1-715.} 

\affiliation{Brown University}

\address{
Division of Applied Mathematics\\
Brown University\\
Providence, RI 02912\\
\printead{e1}\\
\phantom{E-mail:\ }\printead*{e2}}

\runauthor{J. W. Miller and M. T. Harrison}
\end{aug}


\setattribute{journal}{name}{}

\begin{abstract}
In many applications, a finite mixture is a natural model, but it can be difficult to choose an appropriate number of components. To circumvent this choice, investigators are increasingly turning to Dirichlet process mixtures (DPMs), and Pitman--Yor process mixtures (PYMs), more generally. While these models may be well-suited for Bayesian density estimation, many investigators are using them for inferences about the number of components, by considering the posterior on the number of components represented in the observed data. We show that this posterior is not consistent --- that is, on data from a finite mixture, it does not concentrate at the true number of components. This result applies to a large class of nonparametric mixtures, including DPMs and PYMs, over a wide variety of families of component distributions, including essentially all discrete families, as well as continuous exponential families satisfying mild regularity conditions (such as multivariate Gaussians).





\end{abstract}

\begin{keyword}[class=AMS]
\kwd[Primary ]{62G20} 
\kwd[; secondary ]{62G05.}
\end{keyword}


\begin{keyword}
\kwd{consistency}
\kwd{Dirichlet process mixture}
\kwd{number of components}
\kwd{finite mixture}
\kwd{Bayesian nonparametrics.}
\end{keyword}

\end{frontmatter}

\section{Introduction}

\subsection{A motivating example}
\label{section:motivating-example}

In population genetics, determining the ``population structure'' is an important step in the analysis of sampled data. As an illustrative example, consider the impala, a species of antelope in southern Africa. Impalas are divided into two subspecies: the common impala occupying much of the eastern half of the region, and the black-faced impala inhabiting a small area in the west. While common impalas are abundant, the number of black-faced impalas has been decimated by drought, poaching, and declining resources due to human and livestock expansion. To assist conservation efforts, \citet{Lorenzen_2006} collected samples from 216 impalas, and analyzed the genetic variation between/within the two subspecies.

A key part of their analysis consisted of inferring the population structure --- that is, partitioning the data into distinct populations, and in particular, determining how many such populations there are.
To infer the impala population structure, Lorenzen et al.\ employed a widely-used tool called {\sc Structure} \citep{Pritchard_2000} which, in the simplest version, models the data as a finite mixture, with each component in the mixture corresponding to a distinct population. {\sc Structure} uses an ad-hoc method to choose the number of components, but this comes with no guarantees.

Seeking a more principled approach, \citet{Pella_2006} proposed using a Dirichlet process mixture (DPM). Now, in a DPM, the number of components is infinite with probability 1, and thus the posterior on the number of components is always, trivially, a point mass at infinity. Consequently, as is common practice, Pella and Masuda instead employed the posterior on the number of clusters (that is, the number of components used in generating the data observed so far) for inferences about the number of components. 
(The terms ``component'' and ``cluster'' are often used interchangeably, but we make the following crucial distinction: a component is part of a mixture distribution, while a cluster is the set of indices of datapoints coming from a given component.)
This DPM approach was implemented in a software tool called {\sc Structurama} \citep{Huelsenbeck_2007}, and demonstrated on the impala data of Lorenzen et al.; see Figure \ref{figure:posteriors}(a).

\begin{figure}
\centering
\begin{subfigure}{.5\textwidth}
  \centering
  \includegraphics[width=1\linewidth]{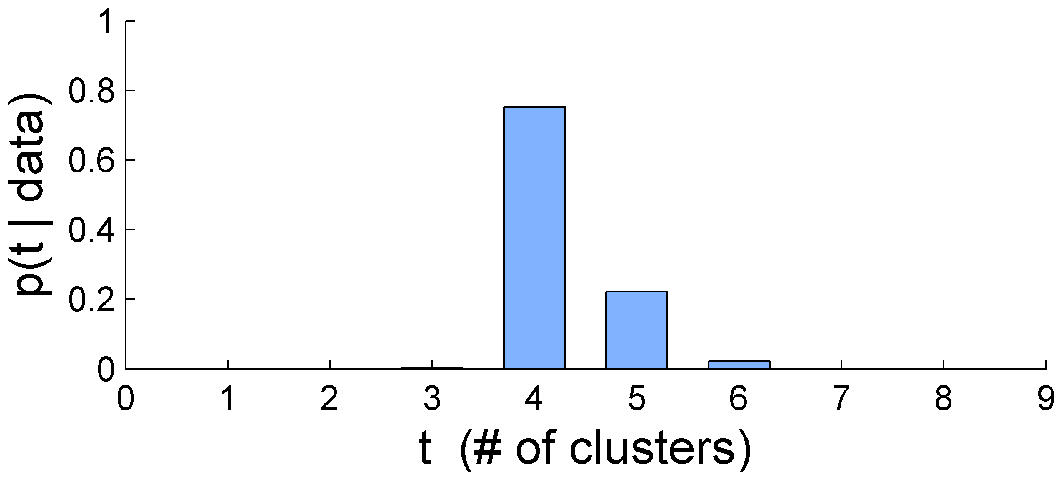}
  \caption{Posterior for impala data}
\end{subfigure}%
\begin{subfigure}{.5\textwidth}
  \centering
  \includegraphics[width=1\linewidth]{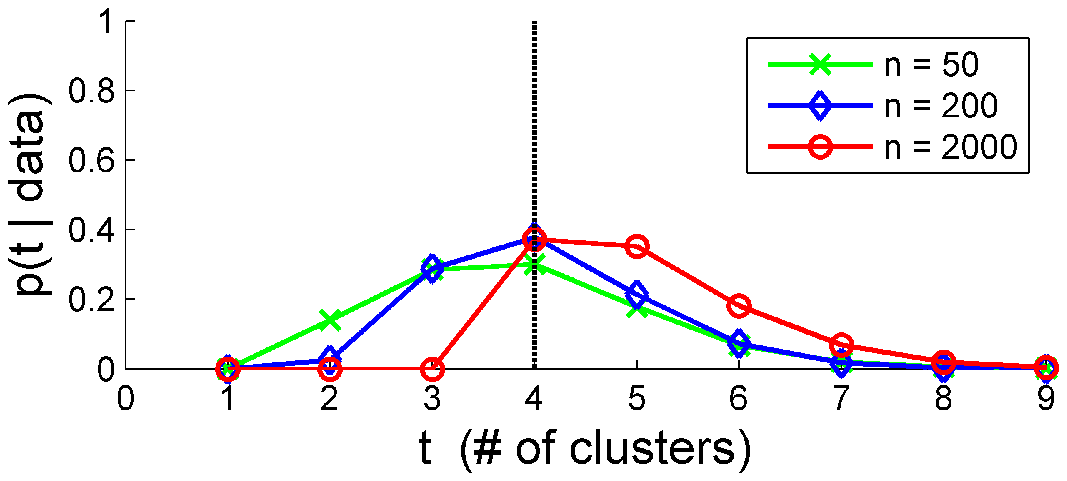}
  \caption{Posterior for Gaussian data}
\end{subfigure}
\caption{Estimated DPM posterior distribution of the number of clusters: (a) For the impala data of Lorenzen et al.\ ($n=216$ datapoints). Our empirical results, shown here, agree with those of Huelsenbeck and Andolfatto. 
(b) For bivariate Gaussian data from a four-component mixture; see Figure \ref{figure:Gaussian_clustering}. Each plot is the average over 10 independently-drawn datasets. (Lines drawn for visualization purposes only.)
(For~(a) and (b), estimates were made via Gibbs sampling, with $10^4$ burn-in sweeps and $10^5$ sample sweeps.)
} 
\label{figure:posteriors}
\end{figure}

{\sc Structurama} has gained acceptance within the population genetics community, and has been used in studies of a variety of organisms, from apples and avocados, to sardines and geckos \citep{Richards_2009,Chen_2009,Gonzalez_2007,Leache_2010}. Studies such as these can carry significant weight, since they may be used by officials to make informed policy decisions regarding agriculture, conservation, and public health.

More generally, in a number of applications the same scenario has played out: a finite mixture seems to be a natural model, but requires the user to choose the number of components, while a Dirichlet process mixture offers a convenient way to avoid this choice. For nonparametric Bayesian density estimation, DPMs are indeed attractive, since the posterior on the density exhibits nice convergence properties; see Section \ref{section:related-work}. However, in several applications, investigators have drawn inferences from the posterior on the number of clusters --- not just the density --- on the assumption that this is informative about the number of components. 
Further examples include
gene expression profiling \citep{Medvedovic_2002},
haplotype inference \citep{Xing_2006},
econometrics \citep{Otranto_2002},
and evaluation of inference algorithms \citep{Fearnhead_2004}.
Of course, if the data-generating process is well-modeled by a DPM (and in particular, there are infinitely many components), then it is sensible to use this posterior for inference about the number of components represented so far in the data --- but that does not seem to be the perspective of these investigators, since they measure performance on simulated data coming from finitely many components or populations.

\begin{figure}[t]
\centering\includegraphics[width=.6\linewidth]{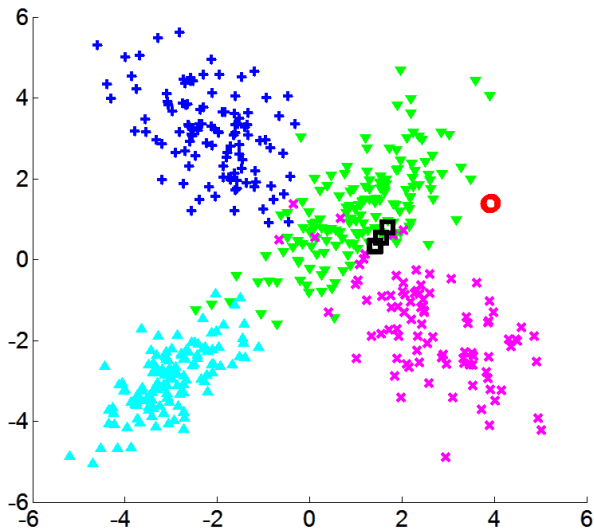}
\caption{A typical partition sampled from the posterior of a Dirichlet process mixture of bivariate Gaussians, on simulated data from a four-component mixture.  Different clusters have different marker shapes ($+,\times,\triangledown,\vartriangle,${\Large$\circ$}$,\Box$) and different colors. Note the tiny ``extra'' clusters ({\Large$\circ$} and $\Box$), in addition to the four dominant clusters.}
\label{figure:Gaussian_clustering}
\end{figure}

Therefore, it is important to understand the properties of this procedure.  Simulation results give some cause for concern; for instance, Figures \ref{figure:posteriors}(b) and \ref{figure:Gaussian_clustering} display results for data from a mixture of two-dimensional Gaussians with four components.  Partitions sampled from the posterior often have tiny ``extra'' clusters, and the posterior on the number of clusters does not appear to be concentrating as the number of datapoints $n$ increases.  This raises a fundamental question that has not been addressed in the literature: With enough data, will this posterior eventually concentrate at the true number of components?  In other words, is it consistent?

\subsection{Overview of results}
\label{section:results}

In this manuscript, we prove that under fairly general conditions, when using a Dirichlet process mixture, the posterior on the number of clusters will not concentrate at any finite value, and therefore will not be consistent for the number of components in a finite mixture. 
In fact, our results apply to a large class of nonparametric mixtures including DPMs, and Pitman--Yor process mixtures (PYMs) more generally, over a wide variety of families of component distributions.

Before treating our general results and their prerequisite technicalities, we would like to highlight a few interesting special cases that can be succinctly stated.  
The terminology and notation used below will be made precise in later sections. 
To reiterate, our results are considerably more general than the following corollary, which is simply presented for the reader's convenience.

\begin{corollary}
\label{corollary:summary}
Consider a Pitman--Yor process mixture with component distributions from one of the following families:
\begin{enumerate}
\item[(a)] $\mathrm{Normal}(\m,\Sigma)$ (multivariate Gaussian),
\item[(b)] $\mathrm{Exponential}(\theta)$,
\item[(c)] $\mathrm{Gamma}(a,b)$,
\item[(d)] $\mathrm{Log\mbox{-}Normal}(\m,\sigma^2)$, or
\item[(e)] $\mathrm{Weibull}(a,b)$ with fixed shape $a>0$,
\end{enumerate}
along with a base measure that is a conjugate prior of the form in Section \ref{section:conjugate-priors}, or
\begin{enumerate}
\item[(f)] any discrete family $\{P_\theta\}$ such that $\I_\theta \{x: P_\theta(x)>0\}\neq\emptyset$ (e.g., Poisson, Geometric, Negative Binomial, Binomial, Multinomial, etc.),
\end{enumerate}
along with any continuous base measure.
Consider any $t\in\{1,2,\dotsc\}$, except for $t = N$ in the case of a Pitman--Yor process with parameters $\apy<0$ and $\vartheta = N|\apy|$.
If $X_1,X_2,\dotsc$ are i.i.d.\ from a mixture with $t$ components from the family used in the model, then the posterior on the number of clusters $T_n$ is not consistent for $t$, and in fact,
$$\limsup_{n\to\infty}\,p(T_n = t\mid X_1,\dotsc,X_n)<1 $$
with probability $1$.
\end{corollary}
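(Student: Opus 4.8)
The plan is to establish a general lower bound on the posterior odds of $t+1$ versus $t$ clusters, and then verify that its hypotheses hold for each family (a)--(f). Writing the posterior on partitions as proportional to (prior on partitions) $\times$ (product of single-cluster marginal likelihoods), I would show that for each partition with $t$ blocks, the $(t+1)$-block partitions obtained by peeling a small sub-block off one of its blocks carry total posterior mass at least a fixed multiple of the original. Summing over all $t$-block partitions (with care to avoid double-counting) then yields
\[
\frac{p(T_n=t+1\mid X_1,\dotsc,X_n)}{p(T_n=t\mid X_1,\dotsc,X_n)} \;\geq\; (\vartheta+t\apy)\cdot R_n,
\]
where $R_n$ collects the combinatorial count of sub-blocks, the rising-factorial ratios from the prior, and the ratio of marginal likelihoods before and after the split. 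It then suffices to show $(\vartheta+t\apy)>0$ and $\liminf_n R_n>0$ almost surely: the posterior odds stay bounded below, so $p(T_n=t\mid X_1,\dotsc,X_n)$ cannot converge to $1$, which is the claimed $\limsup<1$. The problem thus splits into a partition-side condition and a likelihood-side condition.

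The factor $\vartheta+t\apy$ is the reward the Pitman--Yor partition prior assigns for creating one more block, read off directly from its exchangeable partition probability function. It is strictly positive for every Pitman--Yor process except in the degenerate regime $\apy<0$, $\vartheta=N|\apy|$, where $\vartheta+t\apy=(N-t)|\apy|$ and hence vanishes at $t=N$; there the process supports at most $N$ clusters and no further block can be created, which is exactly the excluded case. Within $R_n$, peeling a sub-block of size $a$ off a block of size $k$ incurs a prior penalty of order $k^{-a}$ from the rising factorials, but there are $\binom{k}{a}$ such sub-blocks; since $\binom{k}{a}\sim k^a/a!$, the combinatorial count exactly compensates the penalty, so the prior contribution to $R_n$ stays bounded below even as the (true) blocks grow with $n$.

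What remains in $R_n$ is the likelihood ratio between the split and unsplit configurations, and this is where the two groups of families diverge. In the discrete case (f), the hypothesis $\I_\theta\{x:P_\theta(x)>0\}\neq\emptyset$ supplies a point $x^*$ in the support of every component; i.i.d.\ data from the mixture then contain a growing number of observations equal to $x^*$, and since the single-observation marginal at $x^*$ is bounded below, peeling such tied values into a separate block leaves the likelihood contribution bounded away from $0$. In the continuous cases (a)--(e) there is no shared support point, so I would instead use conjugacy to express each single-cluster marginal in closed form and verify the regularity conditions directly; the multivariate Gaussian with a Normal--Inverse--Wishart base measure is the representative computation, and the Exponential, Gamma, Log-Normal, and fixed-shape Weibull families reduce to the same exponential-family bookkeeping after reparametrization.

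The main obstacle is the continuous case. Without a shared support point, controlling the likelihood contribution to $R_n$ requires understanding the conjugate marginal likelihood as cluster sizes grow and showing --- via Laplace-type estimates of the normalizing constants --- that peeling off a small cluster costs at most a bounded factor in likelihood, so that it cannot overwhelm the constant prior reward $\vartheta+t\apy$. Establishing this uniformly in the random, growing cluster sizes, and confirming that each listed family genuinely meets the mild regularity conditions rather than merely appearing to, is the delicate part of the argument; by comparison, the partition-side combinatorics and the discrete case (f) are comparatively routine.
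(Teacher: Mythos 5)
Your proposal follows essentially the same route as the paper: the posterior-odds lower bound via peeling small (in the paper, singleton) blocks off a $t$-block partition, with the combinatorial count of possible splits compensating the prior's rising-factorial penalty and a bounded-factor control on the marginal-likelihood ratio, is exactly the content of the paper's Lemma \ref{lemma:main} and Condition \ref{condition:phi}, with the factor $\vartheta+t\apy$ appearing there as $1/c_{v_n}(t)$ and the excluded case $t=N$ arising for the same reason. Your split of the likelihood-side verification into the shared-support-point argument for discrete families and the Laplace/conjugate-marginal analysis (uniform over growing cluster sizes) for the continuous exponential families likewise matches Theorems \ref{theorem:bounded} and \ref{theorem:exp-case}.
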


This is implied by Theorems \ref{theorem:main}, \ref{theorem:bounded}, and \ref{theorem:exp-case}.
These more general theorems apply to a broad class of partition distributions, handling Pitman--Yor processes as a special case, and they apply to many other families of component distributions: Theorem \ref{theorem:exp-case} covers a large class of exponential families, and Theorem \ref{theorem:bounded} covers families satisfying a certain boundedness condition on the densities (including any case in which the model and data distributions have one or more point masses in common, as well as many location--scale families with scale bounded away from zero). Dirichlet processes are subsumed as a further special case, being Pitman--Yor processes with parameters $\apy = 0$ and $\vartheta>0$.
Also, the assumption of i.i.d.\ data from a finite mixture is much stronger than what is required by these results. 

Regarding the exception of $t = N$ when $\apy<0$ in Corollary \ref{corollary:summary}: posterior consistency at $t = N$ is possible, however, this could only occur if the chosen parameter $N$ just happens to be equal to the actual number of components, $t$.
On the other hand, consistency at any $t$ can (in principle) be obtained by putting a prior on $N$; see Section \ref{section:related-work} below.
In a similar vein, some investigators place a prior on the concentration parameter $\vartheta$ in a DPM, or allow $\vartheta$ to depend on $n$; we conjecture that inconsistency can still occur in these cases, but in this paper, we examine only the case of fixed $\apy$ and $\vartheta$.

\subsection{Discussion / related work}
\label{section:related-work}

We would like to emphasize that this inconsistency should not be viewed as a deficiency of Dirichlet process mixtures, but is simply due to a misapplication of them.
As flexible priors on densities, DPMs are superb, and there are strong results showing that in many cases the posterior on the density converges in $L_1$ to the true density at the minimax-optimal rate, up to a logarithmic factor (see \citet{Scricciolo_2012}, \citet{Ghosal_2010} and references therein).
Further, \citet{Nguyen_2013} has recently shown that the posterior on the mixing distribution converges in the Wasserstein metric to the true mixing distribution.
However, these results do not necessarily imply consistency for the number of components, since any mixture can be approximated arbitrarily well in these metrics by another mixture with a larger number of components (for instance, by making the weights of the extra components infinitesimally small). There seems to be no prior work on consistency of DPMs (or PYMs) for the number of components in a finite mixture (aside from \citet{Miller_2013}, a brief exposition in which we discuss the very special case of a DPM on data from a univariate Gaussian ``mixture'' with one component of known variance).

In the context of ``species sampling'', several authors have studied the Pitman--Yor process posterior (see \citet{Jang_2010,Lijoi_2007} and references therein), but this is very different from our situation --- in a species sampling model, the observed data is drawn directly from a measure with a Pitman--Yor process prior, while in a PYM model, the observed data is drawn from a mixture with such a measure as the mixing distribution.

\citet{Rousseau_2011} proved an interesting result on ``overfitted'' mixtures, in which data from a finite mixture is modeled by a finite mixture with too many components. In cases where this approximates a DPM, their result implies that the posterior weight of the extra components goes to zero. In a rough sense, this is complementary to our results, which involve showing that there are always some nonempty (but perhaps small) extra clusters.

Empirically, many investigators have noticed that the DPM posterior tends to overestimate the number of components (e.g. \citet{West_1994,Lartillot_2004,Onogi_2011}, and others), and such observations are consistent with our theoretical results.
This overestimation seems to occur because there are typically a few tiny ``extra'' clusters. Among researchers using DPMs for clustering, this is an annoyance that is often dealt with by pruning such clusters --- that is, by simply ignoring them when calculating statistics such as the number of clusters. It may be possible to obtain consistent estimators in this way, but this remains an open question; Rousseau and Mengersen's \citeyearpar{Rousseau_2011} results may be applicable here.

However, if one is truly interested in estimating the number of components in a finite mixture, there is no need to resort to such measures --- one can obtain posterior consistency by simply putting a prior on the number of components \citep{Nobile_1994}. (It turns out that putting a prior on $N$ in a PYM with $\apy<0$, $\vartheta= N|\apy|$ is a special case of this \citep{Gnedin_2006}.)
That said, it seems likely that such estimates will be severely affected by misspecification of the model, which is inevitable in most applications. Robustness to model misspecification seems essential for reliable estimation of the number of components, for real-world data.

\subsection{Intuition for the result}
\label{section:intuition}

To illustrate the intuition behind this inconsistency, consider a Dirichlet process with concentration parameter $\vartheta = 1$. (Similar reasoning applies for any Pitman--Yor process with $\apy\geq 0$, but the $\apy<0$ case is somewhat different.)
It is tempting to think that the prior on the number of clusters is the culprit, since (as is well-known) it diverges as $n\to\infty$. Surprisingly, this does not seem to be the main reason why inconsistency occurs.

Instead, the right intuition comes from examining the prior on partitions, \emph{given} the number of clusters.
The prior on ordered partitions $A =(A_1,\dotsc,A_t)$ is $p(A) =(n!\,t!)^{-1}\prod_{i = 1}^t (a_i - 1)!$, where $t$ is the number of parts (i.e. clusters) and $a_i =|A_i|$ is the size of the $i$th part. (The $t!$ comes from uniformly permuting the parts; see Section \ref{section:Gibbs-partitions}.) Since there are $n!/(a_1!\cdots a_t!)$ such partitions with part sizes $(a_1,\dotsc,a_t)$, the conditional distribution of the sizes $(a_1,\dotsc,a_t)$ given $t$ is proportional to $a_1^{-1}\cdots a_t^{-1}$ (subject to the constraint that $\sum a_i = n$). 
See Figure~\ref{figure:corners} for the case of $t = 2$.
The key observation is that, for large $n$, this conditional distribution is heavily concentrated in the ``corners'', where one or more of the $a_i$'s is small. 

\begin{figure}[t]
\centering\includegraphics[width=1\linewidth]{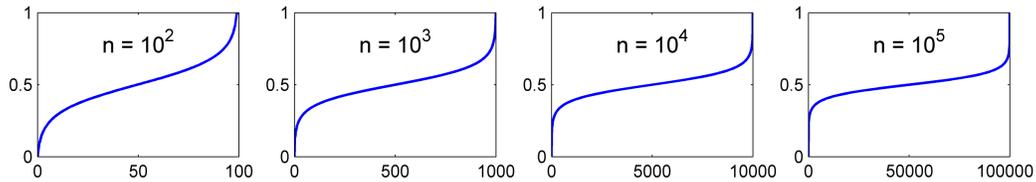}
\caption{The cumulative distribution function of the conditional distribution of $a_1$ given that $t = 2$, for a Dirichlet process with $\vartheta = 1$.  As $n$ increases, the distribution becomes concentrated at the extremes.
}
\label{figure:corners}
\end{figure}

Pursuing this line of thought leads to the following startling fact: the probability of drawing a partition with $t +1$ parts \emph{and one or more of the $a_i$'s equal to $1$} is, at least, the same order of magnitude (with respect to $n$) as the probability of drawing a partition with $t$ parts. 
This leads to the basic idea of the proof --- if the likelihood of the data is also the same order of magnitude, then the posterior probability of $t+1$ will not be too much smaller than that of $t$. Roughly speaking, the posterior will always find it reasonably attractive to split off one element as a singleton.

\subsection{Organization of the paper}
\label{section:organization}

In Section \ref{section:model}, we define Gibbs partition mixture models, which includes Pitman--Yor and Dirichlet process mixtures as special cases.
In Section \ref{section:main}, we prove a general inconsistency theorem for Gibbs partition mixtures satisfying certain conditions.
In Section \ref{section:bounded}, we apply the theorem to cases satisfying a certain boundedness condition on the densities, including discrete families as a special case.
In Section \ref{section:exponential}, we introduce notation for exponential families and conjugate priors, and in Section \ref{section:exp-case}, we apply the theorem to cases in which the mixture is over an exponential family satisfying some regularity conditions.
The remainder of the paper proves the key lemma used in this application.
In Section \ref{section:marginal-inequalities}, we obtain certain inequalities involving the marginal density under an exponential family with conjugate prior.
In Section \ref{section:subset-marginal}, we prove the key lemma of Section \ref{section:exp-case}: an inequality involving the marginal density of any sufficiently large subset of the data.

\section{Model distribution}
\label{section:model}

A primary reason why inconsistency is possible in this situation is that the model is misspecified --- that is, the data comes from a distribution that is not in the model class. 
Thus, our analysis involves two probability distributions: one which is defined by the model, and another which gives rise to the data. In this section, we describe the model distribution.

Dirichlet process mixtures were introduced by \citet{Ferguson_1983} and \citet{Lo_1984} for the purpose of Bayesian density estimation, and were later made practical through the efforts of a number of authors (see \citet{Escobar_1998} and references therein). Pitman--Yor process mixtures \citep{Ishwaran_2003} are a generalization of DPMs based on the Pitman--Yor process \citep{Pitman_1997}.
The model we consider is, in turn, a generalization of PYMs based on the family of Gibbs partitions \citep{Pitman_2006}.

\subsection{Gibbs partitions}
\label{section:Gibbs-partitions}

We will use $p(\cdot)$ to denote probabilities and probability densities under the model.
Our model specification begins with a distribution on partitions, or more precisely, on \emph{ordered} partitions.
Given $n\in\{1,2,\dotsc\}$ and $t\in\{1,\dotsc,n\}$, let $\A_t(n)$ denote the set of all ordered partitions $(A_1,\dotsc,A_t)$ of $\{1,\dotsc,n\}$ into $t$ nonempty sets (or ``parts''). In other words,
$$\A_t(n) =\Big\{(A_1,\dotsc,A_t): A_1,\dotsc,A_t \,\mbox{are disjoint,}\,\,
    \U_{i = 1}^t A_i =\{1,\dotsc,n\},\,\,|A_i|\geq 1\,\,\forall i\Big\}. $$
For each $n\in\{1,2,\dotsc\}$, consider a probability mass function (p.m.f.) on $\U_{t = 1}^n \A_t(n)$ of the form
\begin{align}\label{equation:partition-pmf}
p(A) = v_n(t)\prod_{i = 1}^t w_n(|A_i|)
\end{align}
for $A\in\A_t(n)$, where $v_n:\{1,\dotsc,n\}\to[0,\infty)$ and $w_n:\{1,\dotsc,n\}\to[0,\infty)$. This induces a distribution on $t$ in the natural way, via $p(t\mid A) = I(A\in\A_t(n))$. (Throughout, we use $I$ to denote the indicator function: $I(E)$ is $1$ if $E$ is true, and $0$ otherwise.) It follows that $p(A) = p(A,t)$ when $A\in\A_t(n)$.

Although it is more common to use a distribution on \emph{unordered} partitions $\{A_1,\dotsc,A_t\}$, for our purposes it is more convenient to work with the corresponding distribution on ordered partitions $(A_1,\dotsc,A_t)$ obtained by uniformly permuting the parts.  This does not affect the distribution of $t$.
Under this correspondence, any p.m.f.\ as in Equation \ref{equation:partition-pmf} corresponds to a member of the class of ``exchangeable partition probability functions'', or EPPFs \citep{Pitman_2006}. In particular, for any given $n$ it yields an EPPF in ``Gibbs form'', and a random partition from such an EPPF is called a \emph{Gibbs partition} \citep{Pitman_2006}. (Note: We do not assume that, as $n$ varies, the sequence of p.m.f.s in Equation \ref{equation:partition-pmf} necessarily satisfies the marginalization property referred to as ``consistency in distribution''.)

For example, to obtain the partition distribution for a Dirichlet process (known as a Chinese restaurant process), we can choose
\begin{align}\label{equation:Dirichlet-EPPF}
v_n(t) =\frac{\vartheta^t}{\vartheta_{n\uparrow 1} \,\,t!}\spacing\mbox{ and }\spacing w_n(a) = (a-1)!
\end{align}
where $\vartheta>0$ and $x_{n\uparrow \d} = x(x + \d)(x + 2 \d)\cdots (x +(n-1) \d)$, with $x_{0\uparrow\d} = 1$ by convention. (The $t!$ in the denominator appears since we are working with ordered partitions.) More generally, to obtain the partition distribution for a Pitman--Yor process, we can choose
\begin{align}\label{equation:Pitman--Yor-EPPF}
v_n(t) = \frac{(\vartheta+\apy)_{t-1\uparrow \apy}}{(\vartheta+1)_{n-1\uparrow 1}\,\, t!}
\spacing\mbox{ and }\spacing w_n(a) = (1-\apy)_{a-1\uparrow 1}
\end{align}
where either $\apy\in [0,1)$ and $\vartheta\in(-\apy,\infty)$, or $\apy\in(-\infty,0)$ and $\vartheta=N|\apy|$ for some $N\in\{1,2,\dotsc\}$ \citep{Ishwaran_2003}.
When $\apy = 0$, this reduces to the partition distribution of a Dirichlet process. When $\apy<0$ and $\vartheta= N|\apy|$, it is the partition distribution obtained by drawing $q =(q_1,\dotsc,q_N)$ from a symmetric $N$-dimensional Dirichlet with parameters $|\apy|,\dotsc,|\apy|$, sampling assignments $Z_1,\dotsc,Z_n$ i.i.d.\ from $q$, and removing any empty parts \citep{Gnedin_2006}. Thus, in this latter case, $t$ is always in $\{1,\dotsc,N\}$.

\subsection{Gibbs partition mixtures}
\label{section:Gibbs-partition-mixtures}

Consider the hierarchical model
\begin{align}
& p(A,t) =p(A) = v_n(t)\prod_{i = 1}^t w_n(|A_i|), \label{equation:partitions}\\
& p(\theta_{1:t} \mid A,t) =\prod_{i = 1}^t \pi(\theta_i), \notag\\
& p(x_{1:n} \mid \theta_{1:t},A,t) =\prod_{i = 1}^t \prod_{j\in A_i} p_{\theta_i}(x_j), \notag
\end{align}
where $\pi$ is a prior density on component parameters $\theta\in\Theta\subset\R^k$ for some $k$, and $\{p_\theta:\theta\in\Theta\}$ is a parametrized family of densities on $x\in\X\subset\R^d$ for some~$d$. Here, $x_{1:n} =(x_1,\dotsc,x_n)$ with $x_i\in\X$, $\theta_{1:t} =(\theta_1,\dotsc,\theta_t)$ with $\theta_i\in\Theta$, and $A\in\A_t(n)$.  Assume that $\pi$ is a density with respect to Lebesgue measure, and that $\{p_\theta:\theta\in\Theta\}$ are densities with respect to some sigma-finite Borel measure $\lambda$ on $\X$, such that $(\theta,x)\mapsto p_\theta(x)$ is measurable.  
(Of course, the distribution of $x$ under $p_\theta(x)$ may be discrete, continuous, or neither, depending on the nature of $\lambda$.) 

For $x_1,\dotsc,x_n\in\X$ and $J\subset\{1,\dotsc,n\}$, define the \emph{single-cluster marginal},
\begin{align}\label{equation:marginal}
m(x_J) =\int_\Theta\Big(\prod_{j\in J} p_\theta(x_j)\Big)\,\pi(\theta)\,d\theta,
\end{align}
where $x_J =(x_j: j\in J)$, and assume $m(x_J)<\infty$. By convention, $m(x_J)= 1$ when $J =\emptyset$.
Note that $m(x_J)$ is a density with respect to the product measure $\lambda^\ell$ on $\X^\ell$, where $\ell =|J|$, and that $m(x_J)$ can (and often will) be positive outside the support of $\lambda^\ell$.

\begin{definition}
\label{definition:Gibbs-model}
We refer to such a hierarchical model as a \emph{Gibbs partition mixture model}.
\end{definition}
(Note: This is, perhaps, a slight abuse of the term ``Gibbs partition'', since we allow $v_n$ and $w_n$ to vary arbitrarily with $n$.)
In particular, it is a \emph{Dirichlet process mixture model} when $v_n$ and $w_n$ are as in Equation \ref{equation:Dirichlet-EPPF}, or more generally, a \emph{Pitman--Yor process mixture model} when $v_n$ and $w_n$ are as in Equation \ref{equation:Pitman--Yor-EPPF}. 

We distinguish between the terms ``component'' and ``cluster'': a \emph{component} of a mixture is one of the distributions used in it (e.g. $p_{\theta_i}$), while a \emph{cluster} is the set of indices of datapoints coming from a given component (e.g. $A_i$). 
The prior on the number of clusters under such a model is $p_n(t) = \sum_{A\in\A_t(n)} p(A)$. We use $T_n$, rather than $T$, to denote the random variable representing the number of clusters, as a reminder that its distribution depends on $n$. 

Since we are concerned with the posterior $p(T_n = t \mid x_{1:n})$ on the number of clusters, we will be especially interested in the marginal density of $(x_{1:n},t)$, given~by
\begin{align}
p(x_{1:n},T_n = t) & =\sum_{A\in\A_t(n)}\int p(x_{1:n},\theta_{1:t},A,t)\,d\theta_{1:t} \notag\\
& =\sum_{A\in\A_t(n)} p(A)\prod_{i = 1}^t 
    \int \Big(\prod_{j\in A_i} p_{\theta_i}(x_j)\Big) \pi(\theta_i)\,d\theta_i \notag\\
& =\sum_{A\in\A_t(n)} p(A)\prod_{i = 1}^t m(x_{A_i}). \label{equation:x_t}
\end{align}

As usual, the posterior $p(T_n = t\mid x_{1:n})$ is not uniquely defined, since it can be modified arbitrarily on any subset of $\X^n$ having probability zero under the model distribution. For definiteness, we will employ the usual version of this posterior,
$$ p(T_n = t\mid x_{1:n}) = \frac{p(x_{1:n},T_n = t)}{p(x_{1:n})} =\frac{p(x_{1:n},T_n = t)}{\sum_{t'= 1}^\infty p(x_{1:n},T_n = t')} $$
whenever the denominator is nonzero, and $p(T_n = t\mid x_{1:n}) = 0$ otherwise (for notational convenience).

\section{Inconsistency theorem}
\label{section:main}

The essential ingredients in the main theorem are Conditions \ref{condition:Gibbs} and \ref{condition:phi} below.
For each $n\in\{1,2,\dotsc\}$, consider a partition distribution as in Equation \ref{equation:partition-pmf}.
For $n>t\geq 1$, define
$$c_{w_n} =\max_{a\in\{2,\dotsc,n\}} \frac{w_n(a)}{a\,w_n(a-1)\,w_n(1)}
\spacing\mbox{and}\spacing
c_{v_n}(t) =\frac{v_n(t)}{v_n(t+1)},$$
with the convention that $0/0 = 0$ and $y/0 =\infty$ for $y>0$. 

\begin{condition}
\label{condition:Gibbs}
Assume $\limsup_{n\to\infty} c_{w_n}<\infty$ and $\limsup_{n\to\infty} c_{v_n}(t)<\infty$,
given some particular $t\in\{1,2,\dotsc\}$.
\end{condition}

For Pitman--Yor processes, Condition \ref{condition:Gibbs} holds for all relevant values of $t$, by Proposition \ref{proposition:Gibbs-condition-applies} below.
Now, consider a collection of single-cluster marginals $m(\cdot)$ as in Equation \ref{equation:marginal}. Given $n\geq t\geq 1$, $x_1,\dotsc,x_n\in\X$, and $c\in [0,\infty)$, define
$$\phi_t(x_{1:n},c) = \min_{A\in\A_t(n)}\frac{1}{n}|S_A(x_{1:n},c)|$$
where $S_A(x_{1:n},c)$ is the set of indices $j\in\{1,\dotsc,n\}$ such that the part $A_\ell$ containing $j$ satisfies $m(x_{A_\ell})\leq c\,m(x_{A_\ell\less j})m(x_j)$.

\begin{condition}
\label{condition:phi} Given a sequence of random variables $X_1,X_2,\dotsc\in\X$, a collection of single-cluster marginals $m(\cdot)$, and $t\in\{1,2,\dotsc\}$, assume
$$\sup_{c\in [0,\infty)} \liminf_{n\to\infty}\,\phi_t(X_{1:n},c)>0 \,\mbox{ with probability $1$}.$$
\end{condition}

Note that Condition \ref{condition:Gibbs} involves only the partition distributions, while Condition \ref{condition:phi} involves only the data distribution and the single-cluster marginals. 

\begin{proposition}
\label{proposition:Gibbs-condition-applies}
Consider a Pitman--Yor process. If $\apy\in[0,1)$ and $\vartheta\in(-\apy,\infty)$ then Condition \ref{condition:Gibbs} holds for any $t\in\{1,2,\dotsc\}$. If $\apy\in(-\infty,0)$ and $\vartheta = N|\apy|$, then it holds for any $t\in\{1,2,\dotsc\}$ except $N$.
\end{proposition}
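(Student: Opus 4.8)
The plan is to verify Condition \ref{condition:Gibbs} by direct computation, since for a Pitman--Yor process both $v_n$ and $w_n$ have the closed forms in Equation \ref{equation:Pitman--Yor-EPPF}. The key simplifying observation is that the potentially awkward $n$-dependence largely disappears: $w_n(a) = (1-\apy)_{a-1\uparrow 1}$ does not depend on $n$ at all, and the factor $(\vartheta+1)_{n-1\uparrow 1}$ in $v_n(t)$ is common to $v_n(t)$ and $v_n(t+1)$, so it cancels in the ratio $c_{v_n}(t)$. Thus both quantities appearing in Condition \ref{condition:Gibbs} turn out to be either uniformly bounded in $n$ or altogether independent of $n$, and the $\limsup$'s become trivial once the algebra is done.

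First I would handle $c_{w_n}$. Writing $w(a) = (1-\apy)_{a-1\uparrow 1}$, a one-line telescoping of the rising factorial gives $w(a)/w(a-1) = a-1-\apy$ and $w(1) = 1$, so that
\[
\frac{w(a)}{a\,w(a-1)\,w(1)} = \frac{a-1-\apy}{a} = 1 - \frac{1+\apy}{a}.
\]
I would then read off the maximum over $a\in\{2,\dotsc,n\}$: when $\apy\ge -1$ this expression is increasing in $a$ and bounded above by $1$, while when $\apy<-1$ it is decreasing in $a$ and hence maximized at $a=2$, giving $(1-\apy)/2$. In either admissible regime, $c_{w_n}$ is bounded by the constant $\max\{1,(1-\apy)/2\}$, so $\limsup_n c_{w_n}<\infty$. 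This part is insensitive both to the value of $t$ and to which parameter regime we are in.

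Next I would compute $c_{v_n}(t)$. After cancelling the common $n$-factor $(\vartheta+1)_{n-1\uparrow 1}$ and simplifying the factorials and the ratio of rising factorials (which collapses to a single factor), I expect
\[
c_{v_n}(t) = \frac{v_n(t)}{v_n(t+1)} = \frac{(t+1)!}{t!}\cdot\frac{(\vartheta+\apy)_{t-1\uparrow\apy}}{(\vartheta+\apy)_{t\uparrow\apy}} = \frac{t+1}{\vartheta + t\apy},
\]
which is independent of $n$. It then remains only to decide when this is finite, using the stated conventions $0/0 = 0$ and $y/0 = \infty$ for $y>0$. In the regime $\apy\in[0,1)$, $\vartheta\in(-\apy,\infty)$, every factor $\vartheta+i\apy$ with $i\ge 1$ is strictly positive, so $\vartheta+t\apy>0$ and $c_{v_n}(t)$ is finite for all $t$. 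In the regime $\apy<0$, $\vartheta=N|\apy|$, I would substitute to obtain $\vartheta + i\apy = |\apy|(N-i)$; this reveals that $v_n(t)>0$ for $t\le N$ while $v_n(t)=0$ for $t>N$, and hence that $c_{v_n}(t) = (t+1)/(|\apy|(N-t))$ is finite for $t<N$, equals $\infty$ at $t=N$ (a positive number over zero), and equals $0/0=0$ for $t>N$. Combining the two regimes gives the claim: Condition \ref{condition:Gibbs} holds for every $t$ except $t=N$ in the $\apy<0$ case.

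The only real care required is bookkeeping with the rising-factorial notation and the sign of $\vartheta+t\apy$, together with the edge cases $t=N$ and $t>N$, which must be resolved through the stated $0/0$ and $y/0$ conventions rather than by naive substitution into the simplified formula. There is no genuine analytic obstacle here, only the need to be scrupulous about these degenerate cases, since they are precisely what determines the single exceptional value of $t$.
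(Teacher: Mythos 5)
Your proposal is correct and follows essentially the same route as the paper: direct computation of $c_{w_n}$ and $c_{v_n}(t)$ from the closed forms in Equation \ref{equation:Pitman--Yor-EPPF}, yielding $\frac{w_n(a)}{a\,w_n(a-1)\,w_n(1)}=\frac{a-1-\apy}{a}$ (bounded uniformly in $n$ and $a$) and $c_{v_n}(t)=\frac{t+1}{\vartheta+t\apy}$, with the $\apy<0$ case resolved via the $0/0=0$ and $y/0=\infty$ conventions exactly as in the paper. Your monotonicity analysis of $1-\frac{1+\apy}{a}$ is a slightly sharper bookkeeping of the bound on $c_{w_n}$ than the paper's crude estimate $\frac{1-\apy}{2}+1$, but the argument is otherwise identical.
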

\begin{proof}
This is a simple calculation. See Appendix \ref{section:appendix-proofs}.
\end{proof}

\begin{theorem}
\label{theorem:main}
Let $X_1,X_2,\dotsc\in\X$ be a sequence of random variables (not necessarily i.i.d.).
Consider a Gibbs partition mixture model.
For any $t\in\{1,2,\dotsc\}$, if Conditions \ref{condition:Gibbs} and \ref{condition:phi} hold, then
$$\limsup_{n\to\infty} \,p(T_n = t\mid X_{1:n})<1 \,\mbox{ with probability $1$.}$$
If, further, the sequence $X_1,X_2,\dotsc$ is i.i.d.\ from a mixture with $t$ components, then with probability $1$ the posterior of $T_n$ (under the model) is not consistent for~$t$.
\end{theorem}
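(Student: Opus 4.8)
The plan is to reduce the whole statement to a single lower bound on the ratio
$$R_n := \frac{p(X_{1:n},T_n=t+1)}{p(X_{1:n},T_n=t)}.$$
Since $p(X_{1:n})\geq p(X_{1:n},T_n=t)+p(X_{1:n},T_n=t+1)$, the usual version of the posterior satisfies $p(T_n=t\mid X_{1:n})\leq 1/(1+R_n)$ whenever $p(X_{1:n},T_n=t)>0$ (and the conclusion is trivial otherwise). Hence it suffices to prove $\liminf_n R_n>0$ with probability $1$, for then $\limsup_n p(T_n=t\mid X_{1:n})\leq 1/(1+\liminf_n R_n)<1$ almost surely. So the entire task is to bound $R_n$ below using the two conditions.

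To bound $R_n$, I would set up a ``split'' map: for each $A\in\A_t(n)$ and each index $j$ whose containing part $A_\ell$ has $|A_\ell|\geq 2$, let $\Phi(A,j)\in\A_{t+1}(n)$ be the ordered partition obtained by deleting $j$ from $A_\ell$ and appending $\{j\}$ as a fresh singleton part. Writing $a_\ell=|A_\ell|$, a direct computation from Equation \ref{equation:x_t} shows that the summand of $p(X_{1:n},T_n=t+1)$ attached to $\Phi(A,j)$ equals the summand of $p(X_{1:n},T_n=t)$ attached to $A$, multiplied by
$$\frac{v_n(t+1)}{v_n(t)}\cdot\frac{w_n(a_\ell-1)\,w_n(1)}{w_n(a_\ell)}\cdot\frac{m(x_{A_\ell\less j})\,m(x_j)}{m(x_{A_\ell})}.$$
The three factors are exactly the quantities controlled by the definitions preceding the conditions: the first equals $1/c_{v_n}(t)$, the second is at least $1/(a_\ell c_{w_n})$, and the third is at least $1/c$ precisely when $j\in S_A(x_{1:n},c)$. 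Moreover $\Phi$ is at most $t$-to-one (given $B=\Phi(A,j)$ one recovers $j$ as the appended singleton and must reattach it to one of the remaining $t$ parts), so summing the split summands over all admissible $(A,j)$ overcounts $p(X_{1:n},T_n=t+1)$ by a factor of at most $t$.

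The crux is the combinatorial cancellation of the awkward $1/a_\ell$ factor. Fixing an $A$ whose single-cluster marginals are all positive (the only $A$ contributing), summing over the $j\in A_\ell\cap S_A(x_{1:n},c)$ inside a part of size $a_\ell\geq 2$ yields a factor $|A_\ell\cap S_A|/a_\ell\geq |A_\ell\cap S_A|/n$; summing over parts and discarding the at most $t$ singleton parts gives a total factor of at least $(|S_A(x_{1:n},c)|-t)/n$. This is where Condition \ref{condition:phi} enters: on a probability-$1$ event I may fix a finite $c$ and a $\delta>0$ with $\liminf_n\phi_t(X_{1:n},c)>\delta$, so that $|S_A(X_{1:n},c)|\geq\delta n$ for every $A\in\A_t(n)$ once $n$ is large, making this factor at least $\delta/2$. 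Collecting the estimates and the $t$-to-one overcount,
$$R_n\geq\frac{\delta}{2\,t\,c\,c_{w_n}\,c_{v_n}(t)},$$
and Condition \ref{condition:Gibbs} bounds $\limsup_n c_{w_n}$ and $\limsup_n c_{v_n}(t)$, giving $\liminf_n R_n>0$ almost surely. I expect this cancellation — recognizing that a large part offers proportionally many singletons to peel off, so that the per-element penalty $1/a_\ell$ is exactly compensated by the number of admissible elements it contains — to be the main obstacle, together with the bookkeeping for zero marginals and for the data-dependent choice of $c$ furnished by Condition \ref{condition:phi}.

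For the final assertion, the i.i.d.\ hypothesis serves only to provide a bona fide data-generating distribution. Consistency of the posterior of $T_n$ for $t$ would require $p(T_n=t\mid X_{1:n})\to 1$ in probability, which forces convergence to $1$ almost surely along some subsequence and hence contradicts $\limsup_n p(T_n=t\mid X_{1:n})<1$ holding with probability $1$. Thus, applying the first part to a sequence drawn i.i.d.\ from a $t$-component mixture, inconsistency follows immediately.
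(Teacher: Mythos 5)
Your proposal is correct and follows essentially the same route as the paper's own proof: the singleton-splitting map $\Phi(A,j)$ is the paper's $B(A,j)$, the three controlling factors are exactly $c_{v_n}(t)$, $c_{w_n}$, and $c$ from the definitions of $S_A$, the at-most-$t$-to-one counting and the cancellation of $1/a_\ell$ against $|A_\ell\cap S_A|$ reproduce Equations \ref{equation:product_h-inequality} and \ref{equation:Y_A-bound}, and your bound $p(T_n=t\mid X_{1:n})\leq 1/(1+R_n)$ is the paper's $C/(1+C)$ in Lemma \ref{lemma:main}. The only cosmetic difference is that you phrase the key step as a ratio of summands (which requires the $0/0$ bookkeeping you flag) whereas the paper works with one-sided inequalities on $h_J=w_n(|J|)m(x_J)$ that hold automatically under the stated conventions.
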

\begin{proof}
This follows easily from Lemma \ref{lemma:main} below. See Appendix \ref{section:appendix-proofs}.
\end{proof}

\begin{lemma}
\label{lemma:main}
Consider a Gibbs partition mixture model. Let $n>t\geq 1$, $x_1,\dotsc,x_n\in\X$, and $c\in[0,\infty)$. If $\phi_t(x_{1:n},c)>t/n$, $ c_{w_n}<\infty$, and $c_{v_n}(t)<\infty$, then
$$ p(T_n = t\mid x_{1:n})\leq\frac{C_t(x_{1:n},c)}{1+ C_t(x_{1:n},c)},$$
where $C_t(x_{1:n},c) = t\,c\,c_{w_n}c_{v_n}(t)/(\phi_t(x_{1:n},c)-t/n)$.
\end{lemma}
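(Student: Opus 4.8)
The plan is to bound the posterior by comparing only the two marginal terms $p(x_{1:n},T_n=t)$ and $p(x_{1:n},T_n=t+1)$. Writing $a=p(x_{1:n},T_n=t)$ and $b=p(x_{1:n},T_n=t+1)$ (note $t+1\le n$, so this term is meaningful), we have $p(x_{1:n})=\sum_{t'}p(x_{1:n},T_n=t')\ge a+b$, and since $y\mapsto y/(1+y)$ is increasing on $[0,\infty)$, it suffices to establish the single inequality
\[
p(x_{1:n},T_n=t)\;\le\;C_t(x_{1:n},c)\,p(x_{1:n},T_n=t+1),
\]
after which $p(T_n=t\mid x_{1:n})\le a/(a+b)\le C_t/(1+C_t)$ (the cases $p(x_{1:n})=0$ or $b=0$ force $a=0$ and are trivial). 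Using Equation \ref{equation:x_t}, I write $p(x_{1:n},T_n=t)=v_n(t)\sum_{A\in\A_t(n)}g(A)$, where for any ordered partition $A$ I set $g(A)=\prod_i w_n(|A_i|)\,m(x_{A_i})$ (product over all parts of $A$), and similarly for $t+1$. Thus the task reduces to lower-bounding $\sum_{B\in\A_{t+1}(n)}g(B)$ in terms of $\sum_{A\in\A_t(n)}g(A)$.

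The core mechanism is the ``split off a singleton'' operation of Section \ref{section:intuition}. Given $A\in\A_t(n)$ and an index $j$ lying in a part $A_\ell$ with $|A_\ell|\ge2$, let $B_{A,j}\in\A_{t+1}(n)$ be obtained by deleting $j$ from $A_\ell$ and appending the new part $\{j\}$. A direct computation gives
\[
g(B_{A,j})=g(A)\cdot\frac{w_n(|A_\ell|-1)\,w_n(1)}{w_n(|A_\ell|)}\cdot\frac{m(x_{A_\ell\less j})\,m(x_j)}{m(x_{A_\ell})}.
\]
For indices $j\in S_A(x_{1:n},c)$ the defining inequality of $S_A$ gives $m(x_{A_\ell})\le c\,m(x_{A_\ell\less j})m(x_j)$, while the definition of $c_{w_n}$ gives $w_n(|A_\ell|)\le |A_\ell|\,c_{w_n}\,w_n(|A_\ell|-1)w_n(1)$; together these yield the term-by-term bound $g(B_{A,j})\ge g(A)/(|A_\ell|\,c\,c_{w_n})$ (trivial when $g(A)=0$).

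To aggregate, I would check that $(A,j)\mapsto B_{A,j}$ is \emph{exactly} $t$-to-one onto the set of $B\in\A_{t+1}(n)$ whose last part is a singleton, the inverse being the merge of that singleton back into any one of the other $t$ parts; restricting $\sum_B g(B)$ to such $B$ and passing to the $(A,j)$ sum therefore contributes a factor $1/t$. I expect the main obstacle to be the stray factor $|A_\ell|$ in the denominator of the term bound, which blocks a clean count. It is dispatched by the crude estimate $1/|A_\ell|\ge1/n$: summing over $j\in S_A$ lying in non-singleton parts gives $\sum_j 1/|A_\ell|\ge |S_A'|/n$, where $S_A'$ discards the at most $t$ indices trapped in singleton parts, so $|S_A'|\ge|S_A|-t\ge n\phi_t-t$. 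This is exactly where the definition of $\phi_t$ and the hypothesis $\phi_t>t/n$ enter. Collecting factors yields $\sum_B g(B)\ge\frac{\phi_t-t/n}{t\,c\,c_{w_n}}\sum_A g(A)$; multiplying by $v_n(t+1)$ and comparing with $v_n(t)\sum_A g(A)$ reproduces $C_t(x_{1:n},c)$ via $c_{v_n}(t)=v_n(t)/v_n(t+1)$. The only remaining work is bookkeeping for degenerate values ($c=0$, $c_{w_n}=0$, or a vanishing $v_n$), in each of which $p(x_{1:n},T_n=t)=0$ and the desired inequality holds trivially.
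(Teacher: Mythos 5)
Your proposal is correct and follows essentially the same route as the paper's proof: the same split-off-a-singleton map $B(A,j)$, the same per-term bound combining the definitions of $S_A$ and $c_{w_n}$ (with the crude $|A_\ell|\leq n$ absorbed by $\phi_t - t/n$ after discarding the at most $t$ indices in singleton parts), and the same at-most-$t$-to-one multiplicity count to pass from the $(A,j)$ sum to the sum over $\A_{t+1}(n)$. The only cosmetic difference is that you phrase the key step as a lower bound on $\sum_B g(B)$ while the paper writes the algebraically identical upper bound on $\prod_i h_{A_i}$ by averaging over $R_A$; note also that for the restricted sum over $j\in S_A'$ the map is only \emph{at most} $t$-to-one, which is all that is needed.
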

\begin{proof}
To simplify notation, let us denote $\phi =\phi_t(x_{1:n},c)$, $C = C_t(x_{1:n},c)$, and $S_A = S_A(x_{1:n},c)$ for $A\in\A_t(n)$.
Given $J\subset \{1,\dotsc,n\}$ such that $|J|\geq 1$, define 
$$h_J = w_n(|J|)\,m(x_J). $$
For $A\in\A_t(n)$, let $R_A = S_A\less\big(\U_{i:|A_i|= 1} A_i\big)$, that is, $R_A$ consists of those $j\in S_A$ such that the size of the part $A_\ell$ containing $j$ is greater than $1$. Note that
\begin{align}\label{equation:R_size}
|R_A|\geq|S_A|-t\geq n\phi-t>0.
\end{align}
For any $j\in R_A$, the part $A_\ell$ containing $j$ satisfies 
\begin{align}
h_{A_\ell} &= w_n(|A_\ell|)\,m(x_{A_\ell})\label{equation:h_J-inequality}\\
&\leq c_{w_n}\,|A_\ell|\, w_n(|A_\ell|-1)\,w_n(1)\,c \,m(x_{A_\ell\less j})\,m(x_j)\notag\\
&\leq n \, c\, c_{w_n}\, h_{A_\ell\less j}\, h_j\notag.
\end{align}
Given $j\in R_A$, define $B(A,j)$ to be the element $B$ of $\A_{t +1}(n)$ such that $B_i = A_i\less j$ for $i= 1,\dotsc,t$, and $B_{t+1} =\{j\}$ (that is, remove $j$ from whatever part it belongs to, and make $\{j\}$ the $(t +1)^\textup{th}$ part). Define
$$\Y_A =\big\{B(A,j): j\in R_A\big\}. $$

Now, using Equations \ref{equation:R_size} and \ref{equation:h_J-inequality}, for any $A\in\A_t(n)$ we have
\begin{align}\label{equation:product_h-inequality}
\prod_{i = 1}^t h_{A_i} & =\frac{1}{|R_A|}\sum_{\ell = 1}^t\sum_{j\in R_A\cap A_\ell} h_{A_\ell}\prod_{i\neq \ell} h_{A_i} \\
&\leq\frac{1}{n\phi-t}\sum_{\ell = 1}^t\sum_{j\in R_A\cap A_\ell} n\, c\,c_{w_n}\, h_{A_\ell\less j}\, h_j\prod_{i\neq \ell} h_{A_i}\notag \displaybreak[0]\\
& = \frac{c\,c_{w_n}}{\phi-t/n} \sum_{j \in R_A} \prod_{i = 1}^{t+1} h_{B_i(A,j)} \notag\\
& = \frac{c\,c_{w_n}}{\phi-t/n} \sum_{B\in\A_{t+1}(n)}\Big[\prod_{i = 1}^{t+1} h_{B_i}\Big] I(B\in\Y_A).\notag
\end{align}

For any $B\in\A_{t+1}(n)$,
\begin{align}\label{equation:Y_A-bound}
\#\big\{A\in\A_t(n): B\in\Y_A\big\}\leq t,
\end{align}
since there are only $t$ parts that $B_{t +1}$ could have come from. Therefore,
\begin{align*}
p(x_{1:n},T_n = t) 
& \eq{a} \sum_{A\in\A_t(n)} p(A)\prod_{i = 1}^t m(x_{A_i}) \\
& \eq{b} \sum_{A\in\A_t(n)} v_n(t)\prod_{i = 1}^t h_{A_i} \\
& \leql{c} \frac{c\,c_{w_n}}{\phi-t/n}\, v_n(t) \sum_{A\in\A_t(n)} \sum_{B\in\A_{t+1}(n)}\Big[\prod_{i  = 1}^{t +1} h_{B_i}\Big] I(B\in\Y_A)  \displaybreak[0]\\
& =  \frac{c\,c_{w_n}}{\phi-t/n}\, v_n(t) \sum_{B\in\A_{t+1}(n)} \Big[\prod_{i = 1}^{t +1} h_{B_i}\Big] \#\big\{ A\in\A_t(n): B\in\Y_A\big\}\\
& \leql{d} \frac{c\,c_{w_n}c_{v_n}(t)}{\phi-t/n} \,v_n(t+1) \sum_{B\in\A_{t+1}(n)} \Big[\prod_{i  = 1}^{t +1} h_{B_i}\Big] t\\
& = \frac{t\,c\,c_{w_n}c_{v_n}(t)}{\phi-t/n} \sum_{B\in\A_{t+1}(n)} p(B)\prod_{i = 1}^{t +1} m(x_{B_i})\\
& = C \,\, p(x_{1:n},T_n = t+1),
\end{align*}
where (a) is from Equation \ref{equation:x_t}, (b) is from Equation \ref{equation:partitions} and the definition of $h_J$ above, (c) follows from Equation \ref{equation:product_h-inequality}, and (d) follows from Equation \ref{equation:Y_A-bound}.

If $p(T_n = t\mid x_{1:n}) = 0$, then trivially $p(T_n = t\mid x_{1:n}) \leq C/(C +1)$.
On the other hand, if $p(T_n = t\mid x_{1:n})>0$, then $p(x_{1:n},T_n = t)>0$, and therefore
\begin{flalign*}
&&p(T_n = t\mid x_{1:n}) & =\frac{p(x_{1:n},T_n = t)}{\sum_{t' = 1}^\infty p(x_{1:n},T_n = t')} & \\
&& &\leq \frac{p(x_{1:n},T_n = t)}{p(x_{1:n},T_n = t)+p(x_{1:n},T_n = t+1)} \leq \frac{C}{C +1}. & \qedhere
\end{flalign*}
\end{proof}

\section{Application to discrete or bounded cases}
\label{section:bounded}

By Theorem \ref{theorem:main}, the following result implies inconsistency in a large class of PYM models, including essentially all discrete cases (or more generally anything with at least one point mass) and a number of continuous cases as well.

\begin{theorem}
\label{theorem:bounded}
Consider a family of densities $\{p_\theta:\theta\in\Theta\}$ on $\X$ along with a prior $\pi$ on $\Theta$ and the resulting collection of single-cluster marginals $m(\cdot)$ as in Equation \ref{equation:marginal}. 
Let $X_1,X_2,\dotsc\in\X$ be a sequence of random variables (not necessarily i.i.d.). 
If there exists $U\subset\X$ such that
\begin{enumerate}
\item $\displaystyle\liminf_{n\to\infty}\, \frac{1}{n}\sum_{j = 1}^n I(X_j\in U) > 0$ with probability $1$, and
\item $\displaystyle\sup\Big\{\frac{p_\theta(x)}{m(x)}: x\in U,\,\theta\in\Theta\Big\} <\infty$ (where $0/0 = 0$, $y/0 =\infty$ for $y>0$),
\end{enumerate}
then Condition \ref{condition:phi} holds for all $t\in\{1,2,\dotsc\}$.
\end{theorem}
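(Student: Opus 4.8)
The plan is to show that the single finite constant $c = B := \sup\{p_\theta(x)/m(x) : x\in U,\,\theta\in\Theta\}$ already witnesses the supremum in Condition \ref{condition:phi}. Assumption (2) guarantees $B<\infty$, so it suffices to prove $\liminf_{n\to\infty}\phi_t(X_{1:n},B)>0$ with probability $1$, since the supremum over $c\in[0,\infty)$ can only be larger. The guiding idea is that the combinatorial minimum over the (exponentially many) partitions in $\A_t(n)$ should never actually bind: I expect to produce a single constant that simultaneously certifies $S_A$-membership for every relevant index, uniformly over all partitions.

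The key step is a pointwise bound showing that every index $j$ with $X_j\in U$ automatically belongs to $S_A(X_{1:n},B)$, for \emph{every} $A\in\A_t(n)$, no matter which part contains $j$. Fixing such a $j$ and letting $A_\ell$ be its part, I would write the single-cluster marginal as
$$m(x_{A_\ell}) = \int_\Theta p_\theta(x_j)\prod_{i\in A_\ell\less j} p_\theta(x_i)\,\pi(\theta)\,d\theta$$
and apply assumption (2) in the form $p_\theta(x_j)\leq B\,m(x_j)$, valid for all $\theta$ whenever $x_j\in U$. Since $B\,m(x_j)$ does not depend on $\theta$, pulling it outside the integral leaves exactly $m(x_{A_\ell\less j})$, giving $m(x_{A_\ell})\leq B\,m(x_{A_\ell\less j})\,m(x_j)$, which is precisely the condition defining $S_A(X_{1:n},B)$. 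The crucial feature is that this bound is uniform in $\theta$ and in the choice of part, so it decouples the minimization over partitions from the per-datapoint condition.

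It then follows that $S_A(X_{1:n},B)\supseteq\{j:X_j\in U\}$ for every $A$, whence
$$\phi_t(X_{1:n},B) = \min_{A\in\A_t(n)}\frac{1}{n}\big|S_A(X_{1:n},B)\big| \geq \frac{1}{n}\sum_{j=1}^n I(X_j\in U).$$
Taking $\liminf_{n\to\infty}$ and invoking assumption (1) yields
$$\sup_{c\in[0,\infty)}\liminf_{n\to\infty}\phi_t(X_{1:n},c) \geq \liminf_{n\to\infty}\frac{1}{n}\sum_{j=1}^n I(X_j\in U) > 0$$
with probability $1$, which is exactly Condition \ref{condition:phi}; since $t$ never entered the argument, this holds for all $t\in\{1,2,\dotsc\}$.

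I do not anticipate a genuine obstacle, as the argument is essentially a one-line domination once the right constant is identified. The only point requiring care is the degenerate case $m(x_j)=0$ under the convention $0/0=0$, $y/0=\infty$: finiteness of $B$ then forces $p_\theta(x_j)=0$ for $\pi$-almost every $\theta$ when $X_j\in U$, so both sides of the membership inequality vanish and $j\in S_A$ still holds. Verifying that this edge case is consistent with the stated conventions is the one place where I would slow down; the conceptual content lies entirely in recognizing that a single uniform bound $B$ makes the partition minimum irrelevant.
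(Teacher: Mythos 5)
Your proof is correct and is essentially identical to the paper's: the paper defines the same constant $c=\sup\{p_\theta(x)/m(x):x\in U,\,\theta\in\Theta\}$, establishes the same pointwise bound $m(x_J)\leq c\,m(x_j)\,m(x_{J\less j})$ for $j\in J$ with $x_j\in U$ by pulling $p_\theta(x_j)\leq c\,m(x_j)$ out of the integral, and concludes $\phi_t(x_{1:n},c)\geq\frac{1}{n}|\{j:x_j\in U\}|$ exactly as you do. (One tiny remark: finiteness of the supremum over \emph{all} $\theta\in\Theta$ forces $p_\theta(x)\leq c\,m(x)$ for every $\theta$, not merely $\pi$-almost every $\theta$, so the degenerate case is even cleaner than you suggest.)
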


\begin{proof}
Suppose $U\subset\X$ satisfies (1) and (2), and let $t\in\{1,2,\dotsc\}$. Define $c = \sup\big\{\frac{p_\theta(x)}{m(x)}: x\in U,\,\theta\in\Theta\big\}$. Let $n>t$ and $x_1,\dotsc,x_n\in\X$. Now, for any $x\in U$ and $\theta\in\Theta$, we have $p_\theta(x)\leq c\,m(x)$. Hence, for any $J\subset\{1,\dotsc,n\}$, if $j\in J$ and $x_j\in U$ then
\begin{align}\label{equation:mx_J-inequality}
m(x_J) = \int_\Theta p_\theta(x_j)\Big[\prod_{i\in J\less j} p_\theta(x_i)\Big] \pi(\theta)\,d\theta
\leq c \, m(x_j) m(x_{J\less j}).
\end{align}
Thus, letting $R(x_{1:n}) =\big\{j\in\{1,\dotsc,n\}: x_j\in U\big\}$, we have $R(x_{1:n})\subset S_A(x_{1:n},c)$ for any $A\in\A_t(n)$, and hence, $\phi_t(x_{1:n},c)\geq\frac{1}{n}|R(x_{1:n})|$.

Therefore, by (1), with probability $1$,
$$\liminf_{n\to\infty}\,\phi_t(X_{1:n},c)\geq\liminf_{n\to\infty}\,\frac{1}{n}|R(X_{1:n})|>0. \qedhere$$
\end{proof}

The preceding theorem covers a fairly wide range of cases; here are some examples. Consider a model with $\{p_\theta\}$, $\pi$, $\lambda$, and $m(\cdot)$, as in Section \ref{section:model}.

\begin{enumerate}
\item[(i)] 
{\boldface Finite sample space.}
Suppose $\X$ is a finite set, $\lambda$ is counting measure, and $m(x)>0$ for all $x\in\X$. Then choosing $U =\X$, conditions (1) and~(2) of Theorem \ref{theorem:bounded} are trivially satisfied, regardless of the distribution of $X_1,X_2,\dotsc$.  (Note that when $\lambda$ is counting measure, $p_\theta(x)$ and $m(x)$ are p.m.f.s on $\X$.) It is often easy to check that $m(x)>0$ by using the fact that this is true whenever $\{\theta\in\Theta: p_\theta(x)>0\}$ has nonzero probability under $\pi$.  This case covers, for instance, Multinomials (including Binomials), and the population genetics model from Section \ref{section:motivating-example}.

We should mention a subtle point here: when $\X$ is finite, mixture identifiability might only hold up to a certain maximum number of components (e.g., \citet[Proposition 4]{Teicher_1963} showed this for Binomials), making consistency impossible in general --- however, consistency might still be possible within that identifiable range. Regardless, our result shows that PYMs are not consistent anyway.
\end{enumerate}

Now, suppose $P$ is a probability measure on $\X$, and $X_1,X_2,\dotsc\iid P$.  Let us abuse notation and write $P(x)=P(\{x\})$ and $\lambda(x) =\lambda(\{x\})$ for $x\in\X$.

\begin{enumerate}
\item[(ii)]
{\boldface One or more point masses in common.}
If there exists $x_0 \in\X$ such that $P(x_0)>0$, $\lambda(x_0)>0$, and $m(x_0)>0$, then it is easy to verify that conditions (1) and (2) are satisfied with $U =\{x_0\}$.
(Note that $\lambda(x_0)>0$ implies $p_\theta(x_0)\leq 1/\lambda(x_0)$ for any $\theta\in\Theta$.)

\item[(iii)]
{\boldface Discrete families.}
Case (ii) essentially covers all discrete families --- e.g., Poisson, Geometric, Negative Binomial, or any power-series distribution (see \citet{Sapatinas_1995} for mixture identifiability of these) --- provided that the data is i.i.d.. 
For, suppose $\X$ is a countable set and $\lambda$ is counting measure. By case (ii), the theorem applies if there is any $x_0\in\X$ such that $m(x_0)>0$ and $P(x_0)>0$. If this is not so, the model is extremely misspecified, since then the model distribution and the data distribution are mutually singular.

\item[(iv)]
{\boldface Continuous densities bounded on some non-null compact set.}
Suppose there exists $c\in(0,\infty)$ and $U\subset\X$ compact such that
\begin{enumerate}
\item $P(U)>0$,
\item $x\mapsto p_\theta(x)$ is continuous on $U$ for all $\theta\in\Theta$, and
\item $p_\theta(x) \in(0,c]$ for all $x\in U$, $\theta\in\Theta$.
\end{enumerate}
Then condition (1) is satisfied due to item (a), and condition (2) follows easily from (b) and (c) since $m(x)$ is continuous (by the dominated convergence theorem) and positive on the compact set $U$, so $\inf_{x\in U} m(x)>0$.
This case covers, for example, the following families (with any $P$):
\begin{enumerate}
\item $\mathrm{Exponential}(\theta)$, $\X=(0,\infty)$,
\item $\mathrm{Gamma}(a,b)$, $\X=(0,\infty)$, with variance $a/b^2$ bounded away from zero,
\item $\mathrm{Normal}(\m,\Sigma)$, $\X =\R^d$, (multivariate Gaussian) with $\det(\Sigma)$ bounded away from zero, and
\item many location--scale families with scale bounded away from zero (for instance, $\mathrm{Laplace}(\m,\sigma)$ or $\mathrm{Cauchy}(\m,\sigma)$, with $\sigma\geq\epsilon>0$).
\end{enumerate}

\end{enumerate}

The examples listed in item (iv) are indicative of a deficiency in Theorem \ref{theorem:bounded}: condition (2) is not satisfied in some important cases, such as multivariate Gaussians with unrestricted covariance. Showing that Condition \ref{condition:phi} still holds, for many exponential families at least, is the objective of the remainder of the paper.

\section{Exponential families and conjugate priors}
\label{section:exponential}

\subsection{Exponential families}
\label{section:exponential-families}

In this section, we make the usual definitions for exponential families and state the regularity conditions to be assumed.
Consider an exponential family of the following form. Fix a sigma-finite Borel measure $\lambda$ on $\X\subset\R^d$ such that $\lambda(\X)\neq 0$, let $s:\X\to\R^k$ be Borel measurable, and for $\theta\in\Theta\subset\R^k$, define a density $p_\theta$ with respect to $\lambda$ by setting
$$ p_\theta(x) = \exp(\theta^\t s(x)-\kappa(\theta)) $$
where
$$\kappa(\theta) =\log\int_{\X} \exp(\theta^\t  s(x)) \,d\lambda(x). $$
Let $P_\theta$ be the probability measure on $\X$ corresponding to $p_\theta$, that is,
$P_\theta(E) =\int_E p_\theta(x) \,d\lambda(x)$
for $E\subset\X$ measurable. 
Any exponential family on $\R^d$ can be written in the form above by reparametrizing if necessary, and choosing $\lambda$ appropriately. 
We will assume the following (very mild) regularity conditions.

\begin{conditions}
\label{conditions:regularity}
Assume the family $\{P_\theta:\theta\in\Theta\}$ is:
\begin{enumerate}
\item full, that is, $\Theta =\{\theta\in\R^k:\kappa(\theta)<\infty\}$,
\item nonempty, that is, $\Theta \neq\emptyset$,
\item regular, that is, $\Theta$ is an open subset of $\R^k$, and
\item identifiable, that is, if $\theta\neq\theta'$ then $P_\theta\neq P_{\theta'}$.
\end{enumerate}
\end{conditions}

Most commonly-used exponential families satisfy Conditions \ref{conditions:regularity}, including multivariate Gaussian, Gamma, Poisson, Exponential, Geometric, and others. 
(A notable exception is the Inverse Gaussian, for which $\Theta$ is not open.)
Let $\M$ denote the \emph{moment space}, that is,
$$\M =\{\E_\theta s(X):\theta\in\Theta\} $$
where $\E_\theta$ denotes expectation under $P_\theta$.
Finiteness of these expectations is guaranteed, thus $\M\subset\R^k$; see Appendix \ref{section:exponential-properties} for this and other well-known properties that we will use.

\subsection{Conjugate priors}
\label{section:conjugate-priors}

Given an exponential family $\{P_\theta\}$ as above, let 
$$\Xi=\Big\{(\xi,\new):\xi\in\R^k,\,\new>0 \mbox{ s.t. } \xi/\new\in \M\Big\}, $$
and consider the family $\{\pi_{\xi,\new} : (\xi,\new)\in \Xi\}$ where
$$ \pi_{\xi,\new}(\theta) 
   = \exp\big(\xi^\t\theta -\new\kappa(\theta)-\psi(\xi,\new)\big) \, I(\theta\in\Theta) $$
is a density with respect to Lebesgue measure on $\R^k$.
Here,
$$\psi(\xi,\new) = \log\int_\Theta \exp\left(\xi^\t\theta-\new \kappa(\theta)\right) d\theta. $$
In Appendix \ref{section:exponential-properties}, we note a few basic properties of this family --- in particular, it is a conjugate prior for $\{P_\theta\}$.

\begin{definition}
\label{definition:well-behaved}
We will say that an exponential family with conjugate prior is \emph{well-behaved} if it takes the form above, satisfies Conditions \ref{conditions:regularity}, and has $(\xi,\new)\in\Xi$.
\end{definition}

\section{Application to exponential families}
\label{section:exp-case}

In this section, we apply Theorem~\ref{theorem:main} to prove that in many cases, a PYM model using a well-behaved exponential family with conjugate prior will exhibit inconsistency for the number of components.

\begin{conditions}
\label{conditions:data}
Consider an exponential family with sufficient statistics function $s:\X\to\R^k$ and moment space $\M$.
Given a probability measure $P$ on $\X$, let $X\sim P$ and assume:
\begin{enumerate}
\item $\E|s(X)|<\infty$,
\item $\P(s(X)\in\bar\M) = 1$, and
\item $\P(s(X)\in L) = 0$ for any hyperplane $L$ that does not intersect $\M$.
\end{enumerate}
\end{conditions}
Throughout, we use $|\cdot|$ to denote the Euclidean norm.
Here, a \emph{hyperplane} refers to a set $L =\{x\in\R^k: x^\t y = b\}$ for some $y\in\R^k\less\{0\}$, $b\in\R$. 
In Theorem~\ref{theorem:exp-case} below, it is assumed that the data comes from a distribution $P$ satisfying Conditions \ref{conditions:data}. In Proposition \ref{proposition:continuous-data}, we give some simple conditions under which, if $P$ is a finite mixture from the exponential family under consideration, then Conditions \ref{conditions:data} hold. 

The following theorem follows almost immediately from Lemma \ref{lemma:subset-marginal-bound}, the proof of which will occupy most of the remainder of the paper.

\begin{theorem}
\label{theorem:exp-case}
Consider a well-behaved exponential family with conjugate prior (as in Definition \ref{definition:well-behaved}),  along with the resulting collection of single-cluster marginals $m(\cdot)$.
Let $P$ be a probability measure on $\X$ satisfying Conditions \ref{conditions:data} (for the $s$ and $\M$ from the exponential family under consideration), and let $X_1,X_2,\dotsc\iid P$.  
Then Condition \ref{condition:phi} holds for any $t\in\{1,2,\dotsc\}$.
\end{theorem}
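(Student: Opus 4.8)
The plan is to verify Condition \ref{condition:phi}, since by Theorem \ref{theorem:main} this is all that is required for inconsistency. Thus it suffices to produce a single constant $c\in[0,\infty)$ for which $\liminf_{n\to\infty}\phi_t(X_{1:n},c)>0$ holds with probability $1$. Unlike the bounded case of Theorem \ref{theorem:bounded}, no uniform pointwise bound $p_\theta(x)\le c\,m(x)$ is available here (for instance, a Gaussian component with nearly singular covariance can assign arbitrarily large density to a point near its fitted mean), so the splitting inequality $m(x_{A_\ell})\le c\,m(x_{A_\ell\less j})m(x_j)$ cannot be forced index-by-index. Instead, I would establish and then invoke a key lemma, Lemma \ref{lemma:subset-marginal-bound}, of the following shape: for a well-behaved exponential family with conjugate prior and $P$ satisfying Conditions \ref{conditions:data}, there exists a constant $c\in(0,\infty)$ such that for each fixed $\delta>0$ there is $\rho=\rho(\delta)>0$ for which, with probability $1$, for all sufficiently large $n$, every subset $J\subset\{1,\dotsc,n\}$ with $|J|\ge\delta n$ contains at least $\rho|J|$ indices $j\in J$ satisfying $m(x_J)\le c\,m(x_{J\less j})m(x_j)$. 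Granting such a lemma, the theorem follows by a short counting argument.

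Here is the deduction. Fix $t$, choose any $\delta\in(0,1/t)$, and let $c$ and $\rho=\rho(\delta)$ be as in the lemma. Consider an arbitrary $A=(A_1,\dotsc,A_t)\in\A_t(n)$ and call a part \emph{large} if $|A_\ell|\ge\delta n$ and \emph{small} otherwise. Since there are only $t$ parts, the small parts together hold fewer than $t\delta n$ indices, so the large parts hold more than $(1-t\delta)n$ indices in total. Applying the lemma to each large part $J=A_\ell$ shows that at least $\rho|A_\ell|$ of its indices lie in $S_A(x_{1:n},c)$ (by definition of $S_A$, since the part containing such a $j$ is exactly $A_\ell$). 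Summing over the large parts gives
\[
\frac{1}{n}\,|S_A(x_{1:n},c)|\;\ge\;\rho\,(1-t\delta),
\]
a bound that is strictly positive and, crucially, independent of $A$. Taking the minimum over $A\in\A_t(n)$ yields $\phi_t(x_{1:n},c)\ge\rho(1-t\delta)$, and letting $n\to\infty$ establishes Condition \ref{condition:phi} for every $t$.

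The entire difficulty is thus concentrated in the lemma, not in this reduction, and I expect it to be by far the hardest step. For a conjugate exponential family the marginal takes the closed form $m(x_J)=\exp\big(\psi(s_J+\xi,\,|J|+\new)-\psi(\xi,\new)\big)$ with $s_J=\sum_{j\in J}s(x_j)$, so the splitting ratio is governed by the asymptotics of the conjugate normalizing constant $\psi$. As a part grows, $m(x_J)/m(x_{J\less j})$ behaves like the density of $x_j$ under the natural parameter fitted to the part, and this must be compared against the fixed prior-predictive value $m(x_j)$; the comparison can only break down when the part's empirical sufficient statistic $s_J/|J|$ drifts toward the boundary of the moment space $\M$. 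This is precisely where Conditions \ref{conditions:data} enter: the integrability condition controls the averages, while the no-hyperplane condition~(3), together with the i.i.d.\ assumption and a law of large numbers, is what prevents any positive fraction of the data --- and hence any large part, even one chosen adversarially --- from having its empirical mean pushed to $\partial\M$. Making this control robust and, above all, \emph{uniform} over all sufficiently large subsets $J$ (the minimization over partitions demands uniformity) is the technical heart of the argument, carried out via the marginal inequalities of Section \ref{section:marginal-inequalities} and the subset bound of Section \ref{section:subset-marginal}.
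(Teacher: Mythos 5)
Your proposal is correct and follows essentially the same route as the paper: the key lemma you posit is precisely the paper's Lemma \ref{lemma:subset-marginal-bound} (with a fraction $\rho$ in place of the paper's explicit $1/2$, and the same reliance on the splitting inequality and capture lemma), and the theorem is likewise deduced by a short counting argument. The only cosmetic difference is in the reduction: the paper takes $\beta=1/t$ and uses just the one part of size at least $n/t$ guaranteed by pigeonhole to get $\phi_t\geq 1/(2t)$, whereas you sum over all parts of size at least $\delta n$ to get $\phi_t\geq\rho(1-t\delta)$ --- both are valid and yield Condition \ref{condition:phi}.
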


\begin{proof}
Let $t\in\{1,2,\dotsc\}$ and choose $c$ according to Lemma \ref{lemma:subset-marginal-bound} with $\beta = 1/t$. 
We will show that for any $n>t$, if the event of Lemma \ref{lemma:subset-marginal-bound} holds, then $\phi_t(X_{1:n},c)\geq 1/(2 t)$. Since with probability $1$, this event holds for all $n$ sufficiently large, it will follow that with probability $1$, $\,\liminf_n \phi_t(X_{1:n},c)\geq 1/(2 t)>0$.

So, let $n > t$ and $x_1,\dotsc,x_n\in\X$, and assume the event of Lemma \ref{lemma:subset-marginal-bound} holds.
Let $A\in\A_t(n)$. There is at least one part $A_\ell$ such that $|A_\ell|\geq n/t=\beta n$. Then, by assumption there exists $R_A\subset A_\ell$ such that $|R_A|\geq\frac{1}{2} |A_\ell|$ and for any $j\in R_A$, $m(x_{A_\ell})\leq c\,m(x_{A_\ell\less j})\,m(x_j)$. Thus, $R_A\subset S_A(x_{1:n},c)$, hence $|S_A(x_{1:n},c)|\geq|R_A|\geq\frac{1}{2}|A_\ell|\geq n/(2 t)$. Since $A\in\A_t(n)$ was arbitrary, $\phi_t(x_{1:n},c)\geq 1/(2 t)$.
\end{proof}

This theorem implies inconsistency in several important cases. In particular, it can be verified that each of the following is well-behaved (when put in canonical form and given the conjugate prior in Section \ref{section:conjugate-priors}) and, using Proposition \ref{proposition:continuous-data} below, that if $P$ is a finite mixture from the same family then $P$ satisfies Conditions \ref{conditions:data}:
\begin{enumerate}
\item[(a)] $\mathrm{Normal}(\m,\Sigma)$ (multivariate Gaussian),
\item[(b)] $\mathrm{Exponential}(\theta)$,
\item[(c)] $\mathrm{Gamma}(a,b)$,
\item[(d)] $\mathrm{Log\mbox{-}Normal}(\m,\sigma^2)$, and
\item[(e)] $\mathrm{Weibull}(a,b)$ with fixed shape $a>0$.
\end{enumerate}
Combined with the cases covered by Theorem \ref{theorem:bounded}, these results are fairly comprehensive.

\begin{proposition}
\label{proposition:continuous-data}
Consider an exponential family $\{P_\theta:\theta\in\Theta\}$ satisfying Conditions \ref{conditions:regularity}. If $X\sim P =\sum_{i = 1}^t \pi_i P_{\theta(i)}$ for some $\theta(1),\dotsc,\theta(t)\in\Theta$ and some $\pi_1,\dotsc,\pi_t\geq 0$ such that $\sum_{i = 1}^t \pi_i = 1$, then
\begin{enumerate}
\item $\E|s(X)|<\infty$, and
\item $\P(s(X)\in\bar\M) = 1$.
\end{enumerate}
If, further, the exponential family is continuous (that is, the underlying measure $\lambda$ is absolutely continuous with respect to Lebesgue measure on $\X$), $\X\subset\R^d$ is open and connected, and the sufficient statistics function $s:\X\to\R^k$ is real analytic (that is, each coordinate function $s_1,\dotsc,s_k$ is real analytic), then
\begin{enumerate}
\item[\textup{(3)}] $\P(s(X)\in L) = 0$ for \emph{any} hyperplane $L\subset\R^k$.
\end{enumerate}
\end{proposition}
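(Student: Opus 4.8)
The plan is to prove all three claims by reducing each to a statement about a single mixture component $P_{\theta(i)}$ and then recombining through $P=\sum_{i=1}^t\pi_i P_{\theta(i)}$. For (1), linearity gives $\E|s(X)|=\sum_{i=1}^t\pi_i\,\E_{\theta(i)}|s(X)|$, so it suffices to show $\E_\theta|s(X)|<\infty$ for each $\theta\in\Theta$. Since the family is regular ($\Theta$ open), every $\theta$ lies in the interior of the natural parameter space, where an exponential family has finite moments of all orders; this is one of the standard facts recorded in Appendix \ref{section:exponential-properties}, so (1) is immediate.

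For (2), I would first observe that $p_\theta(x)=\exp(\theta^\t s(x)-\kappa(\theta))>0$ for every $x\in\X$, so each $P_\theta$ is mutually absolutely continuous with $\lambda$ and hence $s(X)$ has the same topological support under $P_\theta$ as under $\lambda$. By the standard convex-support characterization of the moment space (Appendix \ref{section:exponential-properties}), $\bar\M$ equals the closed convex hull of that common support, so in particular the support is contained in $\bar\M$. Thus $s(X)\in\bar\M$ almost surely under each $P_{\theta(i)}$, and since a finite mixture assigns probability $1$ to any event that every component does, $\P(s(X)\in\bar\M)=1$.

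Claim (3) is where the extra hypotheses (continuity, $\X$ open and connected, $s$ real analytic) enter, and it is the main point. Because $\lambda\ll$ Lebesgue and each $P_{\theta(i)}\ll\lambda$, we have $P\ll$ Lebesgue, so it is enough to show that for a hyperplane $L=\{x:x^\t y=b\}$ with $y\neq 0$, the set $Z=\{x\in\X: g(x)=0\}$ is Lebesgue-null, where $g(x)=y^\t s(x)-b=\sum_{j=1}^k y_j s_j(x)-b$. As a linear combination of the real-analytic coordinate functions $s_1,\dotsc,s_k$, $g$ is real analytic on the open connected set $\X$. The argument then rests on two ingredients: (i) $g\not\equiv 0$ on $\X$, and (ii) the classical fact that the zero set of a real-analytic function that is not identically zero on a connected open set has Lebesgue measure zero (provable by induction on the dimension $d$).

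The crux is ingredient (i), which I would derive from identifiability. Suppose instead $g\equiv 0$, i.e.\ $y^\t s(x)=b$ for all $x\in\X$. Fix $\theta\in\Theta$ (possible since $\Theta\neq\emptyset$) and set $\theta'=\theta+ry$ with $r\neq 0$. Then $\theta'^\t s(x)=\theta^\t s(x)+rb$ for all $x$, so $\kappa(\theta')=\kappa(\theta)+rb<\infty$ and hence $\theta'\in\Theta$; moreover $p_{\theta'}(x)=\exp(\theta^\t s(x)+rb-\kappa(\theta)-rb)=p_\theta(x)$ for every $x$, so $P_{\theta'}=P_\theta$ while $\theta'\neq\theta$, contradicting the identifiability in Conditions \ref{conditions:regularity}. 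Hence $g\not\equiv 0$, and (ii) gives that $Z$ is Lebesgue-null, so $\P(s(X)\in L)=\sum_{i=1}^t\pi_i\,P_{\theta(i)}(Z)=0$. I expect the real-analytic zero-set fact in (ii), together with the identifiability reduction in (i), to be the only genuinely substantive steps; parts (1) and (2) follow directly from the exponential-family properties collected in the appendix.
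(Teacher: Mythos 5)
Your proof is correct. The paper defers its own proof of this proposition to the supplementary material, but it proceeds along the same lines: parts (1) and (2) follow from the exponential-family facts collected in Appendix \ref{section:exponential-properties} (finiteness of moments on the open natural parameter space, and $\bar\M = C_\lambda(s) \supset \support(\lambda s^{-1})$ together with $P_{\theta}\ll\lambda$), and part (3) combines $P\ll\mathrm{Lebesgue}$ with the classical fact that a real-analytic function on a connected open set which is not identically zero has a Lebesgue-null zero set, the non-vanishing of $y^\t s(\cdot)-b$ being forced by Conditions \ref{conditions:regularity} --- your reduction via identifiability is one clean way to get this (one could equally contradict the openness of $\M$ or the positive definiteness of $\kappa''$).
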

\begin{proof}
This is relatively straightforward; see the \ref{supplementary-material}.
\end{proof}

Sometimes, Condition \ref{conditions:data}(3) will be satisfied even when Proposition \ref{proposition:continuous-data} is not applicable. In any particular case, it may be a simple matter to check this condition by using the characterization of $\M$ as the interior of the closed convex hull of $\support(\lambda s^{-1})$ (see Proposition \ref{proposition:properties}(8) in the Appendix).

\section{Marginal inequalities}
\label{section:marginal-inequalities}

Consider a well-behaved exponential family with conjugate prior (as in Definition \ref{definition:well-behaved}). In this section, we use some simple bounds on the Laplace approximation (see Appendix \ref{section:Laplace-bounds}) to prove certain inequalities involving the marginal density (from Equation \ref{equation:marginal}),
$$ m(x_{1:n}) =\int_\Theta \Big(\prod_{j = 1}^n p_\theta(x_j)\Big) \pi_{\xi,\new}(\theta)\,d\theta $$
of $x_{1:n} =(x_1,\dotsc,x_n)$, where $x_j\in\X$. 
Of course, it is commonplace to apply the Laplace approximation to $m(X_{1:n})$ when $X_1,\dotsc,X_n$ are i.i.d.\ random variables. In contrast, our application of it is considerably more subtle. For our purposes, it is necessary to show that the approximation is good not only in the i.i.d.\ case, but in fact whenever the sufficient statistics are not too extreme.

We make extensive use of the exponential family properties in Appendix \ref{section:exponential-properties}, often without mention.
We use $f'$ to denote the gradient and $f''$ to denote the Hessian of a (sufficiently smooth) function $f:\R^k\to\R$.
For $\m\in \M$, define
\begin{align*}
& f_\m(\theta) = \theta^\t\m-\kappa(\theta), \\
& \G(\m) =\sup_{\theta\in\Theta} \big(\theta^\t\m-\kappa(\theta)\big), \\
& \theta_\m = \argmax_{\theta\in\Theta} \big(\theta^\t\m-\kappa(\theta)\big),
\end{align*}
and note that $\theta_\m =\kappa'^{-1}(\m)$ (Proposition \ref{proposition:properties}). $\G$ is known as the Legendre transform of $\kappa$. Note that $\G(\m) = f_\m(\theta_\m)$, and $\G$ is $C^\infty$ smooth on $\M$ (since $\G(\m) =\theta_\m^\t\m-\kappa(\theta_\m)$, $\,\,\theta_\m =\kappa'^{-1}(\m)$, and both $\kappa$ and $\kappa'^{-1}$ are $C^\infty$ smooth).
Define 
\begin{align}
\label{equation:m_x}
\m_{x_{1:n}} =\frac{\xi+\sum_{j = 1}^n s(x_j)}{\new + n}
\end{align}
(cf. Equation \ref{equation:joint-exponential}), and given $x_{1:n}$ such that $\m_{x_{1:n}}\in \M$, define
$$\tilde m(x_{1:n}) = (\new+n)^{-k/2} \exp\big((\new+n)\,\G(\m_{x_{1:n}})\big),$$
where $k$ is the dimension of the sufficient statistics function $s:\X\to\R^k$.
The first of the two results of this section provides uniform bounds on $m(x_{1:n})/\tilde m(x_{1:n})$. 
Here, $\tilde m(x_{1:n})$ is only intended to approximate $m(x_{1:n})$ up to a multiplicative constant --- a better approximation could always be obtained via the usual asymptotic form of the Laplace approximation.

\begin{proposition}
\label{proposition:marginal-approximation}
Consider a well-behaved exponential family with conjugate prior.
For any $U\subset \M$ compact, there exist $C_1,C_2 \in(0,\infty)$ such that for any $n\in\{1,2,\dotsc\}$ and any $x_1,\dotsc,x_n\in\X$ satisfying $\m_{x_{1:n}} \in U$, we have
$$ C_1\leq\frac{m(x_{1:n})}{\tilde m(x_{1:n})}\leq C_2. $$
\end{proposition}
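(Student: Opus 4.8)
The plan is to reduce the claim to a uniform, non-asymptotic Laplace estimate for a single exponential integral. Substituting the exponential-family density and the conjugate prior into $m(x_{1:n})$, the integrand collapses to one exponential: writing $N=\new+n$ and $\m=\m_{x_{1:n}}$, the exponent becomes $\theta^\t\big(\xi+\sum_j s(x_j)\big)-N\kappa(\theta)=N\big(\theta^\t\m-\kappa(\theta)\big)=N f_\m(\theta)$, so
$$ m(x_{1:n}) = e^{-\psi(\xi,\new)}\int_\Theta \exp\big(N f_\m(\theta)\big)\,d\theta. $$
Since $e^{-\psi(\xi,\new)}$ is a fixed positive constant and $\tilde m(x_{1:n})=N^{-k/2}e^{N\G(\m)}$, the proposition is equivalent to bounding
$$ J_N(\m):=N^{k/2}\int_\Theta \exp\big(N(f_\m(\theta)-\G(\m))\big)\,d\theta $$
above and below by positive constants, uniformly over $\m\in U$ and over $N\in\{\new+1,\new+2,\dotsc\}$. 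Here $f_\m$ is strictly concave with unique maximizer $\theta_\m=\kappa'^{-1}(\m)$, maximum $\G(\m)$, and $f_\m''(\theta)=-\kappa''(\theta)$. The main difficulty is exactly this uniformity: the bounds must hold simultaneously over the compact $U$ and the unbounded range of $N$, whereas the textbook asymptotic Laplace approximation yields only pointwise-in-$\m$ limits, so I instead extract explicit two-sided bounds (these are the ``simple bounds'' of Appendix \ref{section:Laplace-bounds}).

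Next I set up uniform local geometry. Since $\kappa'^{-1}$ is continuous, $K:=\{\theta_\m:\m\in U\}$ is a compact subset of the open set $\Theta$; I fix $r>0$ so that the closed $r$-neighborhood $K_r$ of $K$ is still contained in $\Theta$, hence compact. As $\kappa''$ is continuous and positive definite, there are constants $0<\rho_-\le\rho_+<\infty$ with $\rho_-|v|^2\le v^\t\kappa''(\theta)v\le\rho_+|v|^2$ for all $\theta\in K_r$ and $v\in\R^k$. For $|\theta-\theta_\m|\le r$, a second-order Taylor expansion of $f_\m$ about $\theta_\m$ (using $f_\m'(\theta_\m)=0$, with remainder Hessian evaluated on the segment, which lies in $K_r$) gives the two-sided bound $-\tfrac{\rho_+}{2}|\theta-\theta_\m|^2\le f_\m(\theta)-\G(\m)\le-\tfrac{\rho_-}{2}|\theta-\theta_\m|^2$.

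Substituting $u=\sqrt N(\theta-\theta_\m)$ then controls the contribution of $\{|\theta-\theta_\m|\le r\}$ to $J_N(\m)$. From above it is at most $\int_{\R^k}e^{-\rho_-|u|^2/2}\,du=(2\pi/\rho_-)^{k/2}$, and from below, since $\sqrt N>1$ forces $\{|u|\le r\}\subset\{|u|\le r\sqrt N\}$, it is at least the strictly positive constant $\int_{|u|\le r}e^{-\rho_+|u|^2/2}\,du$; both are uniform in $\m$ and $N$. It remains to show the tail $\{|\theta-\theta_\m|>r\}$ cannot spoil these estimates. The quadratic upper bound at radius $r$ gives $f_\m(\theta)-\G(\m)\le-\eta_0$ with $\eta_0:=\tfrac{\rho_-}{2}r^2>0$ whenever $|\theta-\theta_\m|=r$, and concavity of $f_\m$ (which decreases along every ray from its maximizer) propagates this to $f_\m(\theta)-\G(\m)\le-\eta_0$ for all $|\theta-\theta_\m|\ge r$, with $\eta_0$ independent of $\m$. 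Splitting $N=(N-1)+1$ and using $N\ge\new+1$,
$$ N^{k/2}\int_{|\theta-\theta_\m|>r} e^{N(f_\m(\theta)-\G(\m))}\,d\theta \le N^{k/2}e^{-(N-1)\eta_0}\,e^{-\G(\m)}\int_\Theta e^{f_\m(\theta)}\,d\theta = N^{k/2}e^{-(N-1)\eta_0}\,e^{\psi(\m,1)-\G(\m)}, $$
where the last equality uses $\int_\Theta e^{f_\m(\theta)}\,d\theta=e^{\psi(\m,1)}$ and $(\m,1)\in\Xi$ because $\m\in U\subset\M$.

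Finally I combine. Since $\G$ and $\psi(\cdot,1)$ are continuous on the compact sets $U$ and $\{(\m,1):\m\in U\}\subset\Xi$ respectively, $\sup_{\m\in U}e^{\psi(\m,1)-\G(\m)}<\infty$, while $N^{k/2}e^{-(N-1)\eta_0}$ is bounded over $N\ge\new+1$ (indeed it tends to $0$); hence the tail term is dominated by a single constant, uniformly in $\m$ and $N$. Adding the tail bound to the near-region upper bound gives a uniform finite upper bound on $J_N(\m)$, and the near-region lower bound gives a uniform positive lower bound. Multiplying through by the fixed constant $e^{-\psi(\xi,\new)}$ then yields the constants $C_1,C_2\in(0,\infty)$ asserted in the proposition. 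I expect the tail control — specifically securing the $\m$-uniform concavity gap $\eta_0$ and the uniform integrability $\sup_{\m\in U}e^{\psi(\m,1)-\G(\m)}<\infty$ — to be the step requiring the most care, since it is where the interplay between the noncompact integration domain, the unbounded $N$, and uniformity over $U$ all come together.
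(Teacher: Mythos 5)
Your proposal is correct and follows essentially the same route as the paper: reduce $m/\tilde m$ to a Laplace-type estimate for $\int_\Theta e^{Nf_\m}$, use compactness of $U$ (via $\kappa'^{-1}(U)$ and an $r$-neighborhood inside $\Theta$) to get uniform two-sided eigenvalue bounds on $\kappa''$ near $\theta_\m$, a uniform concavity gap off the ball, and finiteness of $\sup_{\m\in U}e^{\psi(\m,1)-\G(\m)}$ to control the tail with the split $N=(N-1)+1$. The only difference is that you inline the two-sided Laplace bounds (the paper delegates them to Lemma \ref{lemma:Laplace-bounds} and Corollary \ref{corollary:Laplace-bounds} with $s=1$) and obtain the gap $\eta_0$ from the quadratic bound at radius $r$ plus concavity along rays, rather than as the infimum of a positive continuous function on a compact set; both are sound.
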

\begin{proof}
Assume $U\neq\emptyset$, since otherwise the result is trivial.
Let 
$$V =\kappa'^{-1}(U) =\{\theta_\m:\m\in U\}. $$
It is straightforward to show that there exists $\epsilon\in(0,1)$ such that $V_\epsilon\subset\Theta$ where
$$ V_\epsilon =\{\theta\in\R^k: d(\theta,V)\leq\epsilon\}. $$
(Here, $d(\theta,V) =\inf_{\theta'\in V} |\theta-\theta'|$.) Note that $V_\epsilon$ is compact, since $\kappa'^{-1}$ is continuous. Given a symmetric matrix $A$, define $\lambda_*(A)$ and $\lambda^*(A)$ to be the minimal and maximal eigenvalues, respectively, and recall that $\lambda_*,\lambda^*$ are continuous functions of the entries of $A$. Letting 
$$\a =\min_{\theta\in V_\epsilon} \lambda_*(\kappa''(\theta))
\spacing\mbox{and}\spacing
\beta =\max_{\theta\in V_\epsilon} \lambda^*(\kappa''(\theta)),$$
we have $0<\a\leq\beta<\infty$ since $V_\epsilon$ is compact and $\lambda_*(\kappa''(\cdot))$, $\lambda^*(\kappa''(\cdot))$ are continuous and positive on $\Theta$.
Letting
$$\gamma =\sup_{\m\in U} e^{-f_\m(\theta_\m)} \int_\Theta \exp(f_\m(\theta)) d\theta 
=\sup_{\m\in U} e^{-\G(\m)} e^{\psi(\m,1)}$$
we have $0<\gamma<\infty$ since $U$ is compact, and both $\G$ (as noted above) and $\psi(\m,1)$ (by Proposition \ref{proposition:conjugate}) are continuous on $\M$. Define
$$ h(\m,\theta) = f_\m(\theta_\m)-f_\m(\theta) = \G(\m)-\theta^\t \m +\kappa(\theta) $$
for $\m\in \M$, $\theta\in\Theta$. For any $\m\in \M$, we have that $h(\m,\theta)>0$ whenever $\theta\in\Theta\less\{\theta_\m\}$, and that $h(\m,\theta)$ is strictly convex in $\theta$. Letting $B_\epsilon(\theta_\m) =\{\theta\in\R^k:|\theta-\theta_\m|\leq\epsilon\}$, it follows that
$$\d:=\inf_{\m\in U}\inf_{\theta\in\Theta\less B_\epsilon(\theta_\m)} h(\m,\theta)
     =\inf_{\m\in U}\inf_{u\in\R^k:|u|= 1} h(\m,\theta_\m +\epsilon u)$$
is positive, as the minimum of a positive continuous function on a compact set.

Now, applying the Laplace approximation bounds in Corollary \ref{corollary:Laplace-bounds} with $\a,\beta,\gamma,\d,\epsilon$ as just defined, we obtain $c_1,c_2 \in(0,\infty)$ such that for any $\m\in U$ we have (taking $E =\Theta$, $f =-f_\m$, $x_0 =\theta_\m$, $A =\a I_{k\times k}$, $B =\beta I_{k\times k}$)
$$ c_1\leq\frac{\int_\Theta \exp(t f_\m(\theta)) d\theta}{t^{-k/2}\exp(t f_\m(\theta_\m))}\leq c_2 $$
for any $t\geq 1$.
We prove the result with $C_i = c_i\,e^{-\psi(\xi,\new)}$ for $i = 1,2$.

Let $n\in\{1,2,\dotsc\}$ and $x_1,\dotsc,x_n\in\X$ such that $\m_{x_{1:n}}\in U$. Choose $t = \new+n$.  By integrating Equation \ref{equation:joint-exponential}, we have
$$ m(x_{1:n}) = e^{-\psi(\xi,\new)} \int_\Theta \exp\big(t f_{\m_{x_{1:n}}}(\theta)\big)\,d\theta, $$
and meanwhile,
$$\tilde m(x_{1:n})=t^{-k/2}\exp\big(t f_{\m_{x_{1:n}}}(\theta_{\m_{x_{1:n}}})\big).$$
Thus, combining the preceding three displayed equations,
$$0<C_1 = c_1 e^{-\psi(\xi,\new)} 
   \leq\frac{m(x_{1:n})}{\tilde m(x_{1:n})}\leq c_2 e^{-\psi(\xi,\new)} = C_2<\infty. \qedhere$$
\end{proof}

The second result of this section is an inequality involving a product of marginals.

\begin{proposition}[Splitting inequality]
\label{proposition:splitting-inequality}
Consider a well-behaved exponential family with conjugate prior.
For any $U\subset \M$ compact there exists $C\in(0,\infty)$ such that we have the following:

For any $n\in\{1,2,\dotsc\}$, if $A\subset\{1,\dotsc,n\}$ and $B =\{1,\dotsc,n\}\less A$ are nonempty, and $x_1,\dotsc,x_n\in\X$ satisfy $\frac{1}{|A|}\sum_{j\in A} s(x_j)\in U$ and $\m_{x_B}\in U$, then
$$\frac{m(x_{1:n})}{m(x_A) m(x_B)}\leq C\,\,\Big(\frac{a b}{\new+n}\Big)^{k/2} $$
where $a =\new +|A|$ and $b =\new +|B|$.
\end{proposition}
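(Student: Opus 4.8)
The plan is to reduce the whole statement to the Laplace-type approximation of Proposition~\ref{proposition:marginal-approximation} and then control a single exponent using the convexity of the Legendre transform $\G$. Write $\m_0 = \xi/\new$, which lies in $\M$ because $(\xi,\new)\in\Xi$; set $S_A=\sum_{j\in A}s(x_j)$, $\bar{s}_A=S_A/|A|$, and similarly for $B$; and recall $a=\new+|A|$, $b=\new+|B|$, so that $a+b=2\new+n$, $\new+n=|A|+b$, and hence $a+b=(\new+n)+\new$. By the exponential-family/conjugate identities, the three relevant posterior means are $\m_{x_A}=(\xi+S_A)/a$, $\m_{x_B}=(\xi+S_B)/b$, and $\m_{x_{1:n}}=(\xi+S_A+S_B)/(\new+n)$.

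First I would show that all three means lie in one compact subset of $\M$, so that Proposition~\ref{proposition:marginal-approximation} applies uniformly to $m(x_{1:n})$, $m(x_A)$, and $m(x_B)$. By hypothesis $\m_{x_B}\in U$ and $\bar{s}_A\in U$. Writing $\m_{x_A}=\tfrac{\new}{a}\m_0+\tfrac{|A|}{a}\bar{s}_A$ exhibits $\m_{x_A}$ as a convex combination of $\m_0$ and $\bar{s}_A$, so $\m_{x_A}\in U_1:=\mathrm{conv}(U\cup\{\m_0\})$, which is a compact subset of $\M$ since $U\subset\M$, $\m_0\in\M$, and $\M$ is convex. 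The clean observation is that, substituting $S_B=b\,\m_{x_B}-\new\m_0$, one gets $\m_{x_{1:n}}=(|A|\bar{s}_A+b\,\m_{x_B})/(|A|+b)$, a convex combination of $\bar{s}_A\in U$ and $\m_{x_B}\in U$; thus $\m_{x_{1:n}}\in\mathrm{conv}(U)\subset U_1$ as well. Let $C_1,C_2$ be the constants of Proposition~\ref{proposition:marginal-approximation} for $U_1$.

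With $\tilde m$ as in Section~\ref{section:marginal-inequalities}, a direct computation gives $\tilde m(x_{1:n})/\big(\tilde m(x_A)\tilde m(x_B)\big)=(ab/(\new+n))^{k/2}\exp(E)$ with $E=(\new+n)\G(\m_{x_{1:n}})-a\G(\m_{x_A})-b\G(\m_{x_B})$, and Proposition~\ref{proposition:marginal-approximation} yields $m(x_{1:n})/\big(m(x_A)m(x_B)\big)\le (C_2/C_1^2)(ab/(\new+n))^{k/2}\exp(E)$. It remains to bound $E$ by a constant independent of $n,A,B$. Here I would use that $\G$ is convex, being a supremum of affine functions. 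Setting $\bar\m=(a\m_{x_A}+b\m_{x_B})/(a+b)\in U_1$, Jensen gives $a\G(\m_{x_A})+b\G(\m_{x_B})\ge(a+b)\G(\bar\m)$, so that
$$E\le(\new+n)\G(\m_{x_{1:n}})-(a+b)\G(\bar\m)=(\new+n)\big(\G(\m_{x_{1:n}})-\G(\bar\m)\big)-\new\G(\bar\m).$$
The identity $a\m_{x_A}+b\m_{x_B}=(\new+n)\m_{x_{1:n}}+\new\m_0$ forces $\m_{x_{1:n}}-\bar\m=\new(\m_{x_{1:n}}-\m_0)/(a+b)$, hence $|\m_{x_{1:n}}-\bar\m|\le\new\,\mathrm{diam}(U_1)/(a+b)$. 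Since $\G$ is $C^\infty$ on $\M$, it is Lipschitz (constant $L$) and bounded on the compact set $U_1$, and because $(\new+n)/(a+b)\le1$ we get $(\new+n)|\G(\m_{x_{1:n}})-\G(\bar\m)|\le L\,\new\,\mathrm{diam}(U_1)$ and $|\new\G(\bar\m)|\le\new\max_{U_1}|\G|$. Both are constants depending only on the family, the prior, and $U$, so $\exp(E)$ is bounded by some $C_0$, giving the proposition with $C=C_2C_0/C_1^2$.

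The main obstacle is precisely this uniform bound on $E$: the powers of the cluster sizes cancel into the stated prefactor automatically, but the exponential term could a priori grow linearly in $n$. What tames it is pairing Jensen's inequality (trading the two clusters' values $\G(\m_{x_A}),\G(\m_{x_B})$ for the single value $\G(\bar\m)$) with the observation that $\m_{x_{1:n}}$ and $\bar\m$ differ by only $O(1/(a+b))$, so the dangerous factor $\new+n$ multiplying their $\G$-difference stays $O(1)$. A secondary point needing care is verifying that $\m_{x_{1:n}}$ lands in a \emph{fixed compact subset} of the open set $\M$ (not merely in $\bar\M$), which is exactly what the convex-combination representation $\m_{x_{1:n}}=(|A|\bar{s}_A+b\,\m_{x_B})/(|A|+b)$ provides.
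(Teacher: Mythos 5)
Your proposal is correct and follows essentially the same route as the paper's proof: the same compact convex hull $\mathrm{conv}(U\cup\{\xi/\new\})$, the same reduction via Proposition \ref{proposition:marginal-approximation} to a ratio of $\tilde m$'s, and the same use of convexity of $\G$ to absorb the exponent into an $O(1)$ constant. The only cosmetic difference is that the paper bounds the error term via the subgradient inequality for $\G$ at $\m_{x_{1:n}}$, whereas you apply Jensen first and then a Lipschitz estimate on the $O(1/(a+b))$ displacement between $\m_{x_{1:n}}$ and $\bar\m$; the two are interchangeable.
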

\begin{proof}
Let $U'$ be the convex hull of $U\cup \{\xi/\new\}$. Then $U'$ is compact (as the convex hull of a compact set in $\R^k$) and $U'\subset\M$ (since $U\cup\{\xi/\new\}\subset\M$ and $\M$ is convex). We show that the result holds with $C = C_2 \exp(C_0)/C_1^2$, where $C_1,C_2\in(0,\infty)$ are obtained by applying Proposition \ref{proposition:marginal-approximation} to $U'$, and
\begin{align}
\label{equation:C_0}
C_0 = \new \sup_{y\in U'}|(\xi/\new-y)^\t \G'(y)|+\new\sup_{y\in U'}|\G(y)|<\infty.
\end{align}

Since $\G$ is convex (being a Legendre transform) and smooth, then for any ${y,z\in \M}$ we have
$$\inf_{\rho\in(0,1)} \frac{1}{\rho}\big(\G(y +\rho(z-y))-\G(y)\big) = (z-y)^\t \G'(y) $$
(by e.g. \cite{Rockafellar_1970} 23.1) and therefore for any $\rho\in(0,1)$,
\begin{align}
\label{equation:Legendre-inequality}
\G(y)\leq \G((1-\rho)y + \rho z)-\rho(z-y)^\t \G'(y).
\end{align}
Choosing $y =\m_{x_{1:n}}$, $z =\xi/\new$, and $\rho =\new/(n +2\new)$, we have
\begin{align}
\label{equation:convex-combination}
(1-\rho) y + \rho z =\frac{2\xi + \sum_{j = 1}^n s(x_j)}{2\new+n} = \frac{a \m_{x_A}+ b\m_{x_B}}{a+b}.
\end{align}
Note that $\m_{x_A},\m_{x_B},\m_{x_{1:n}}\in U'$, by taking various convex combinations of $\xi/\new$, $\frac{1}{|A|}\sum_{j\in A} s(x_j)$, $\m_{x_B} \in U'$. Thus,
\begin{align*}
(\new+n) \G(\m_{x_{1:n}})&= (a+b)\G(y)-\new \G(y) \\
&\leql{a} (a+b)\G((1-\rho) y + \rho z)-(a+b)\rho(z-y)^\t \G'(y)-\new \G(y)  \displaybreak[0]\\
&\leql{b} (a+b) \G\Big(\frac{a \m_{x_A}+ b\m_{x_B}}{a+b}\Big) + C_0 \\
&\leql{c} a \G(\m_{x_A}) + b \G(\m_{x_B}) + C_0,
\end{align*}
where (a) is by Equation \ref{equation:Legendre-inequality}, (b) is by Equations \ref{equation:C_0} and \ref{equation:convex-combination}, and (c) is by the convexity of $\G$.
Hence, $(\new+n)^{k/2} \tilde m(x_{1:n})\leq (ab)^{k/2}\tilde m(x_A)\tilde m(x_B)\exp(C_0)$,
so by our choice of $C_1$ and $C_2$,
$$\frac{m(x_{1:n})}{m(x_A) m(x_B)}\leq \frac{C_2 \tilde m(x_{1:n})}{C_1^2\tilde m(x_A)\tilde m(x_B)}
\leq\frac{C_2\exp(C_0)}{C_1^2}\Big(\frac{ab}{n +\new}\Big)^{k/2}. \qedhere$$
\end{proof}

\section{Marginal inequality for subsets of the data}
\label{section:subset-marginal}

In this section, we prove Lemma~\ref{lemma:subset-marginal-bound}, the key lemma used in the proof of Theorem \ref{theorem:exp-case}.
First, we need a few supporting results.

Given $y_1,\dotsc,y_n\in\R^\ell$ (for some $\ell>0$), $\beta\in(0,1]$, and $U\subset\R^\ell$, define
$$ \Capture_\beta(y_{1:n},U) =\prod_{\substack{A\subset\{1,\dotsc,n\}:\\|A|\geq\beta n}} I\Big(\frac{1}{|A|}\sum_{j\in A} y_j\in U\Big), $$
where as usual, $I(E)$ is $1$ if $E$ is true, and $0$ otherwise.

\begin{lemma}[Capture lemma]
\label{lemma:capture}
Let $V\subset\R^k$ be open and convex. Let $Q$ be a probability measure on $\R^k$ such that:
\begin{enumerate}
\item $\E|Y|<\infty$ when $Y\sim Q$,
\item $Q(\bar V) = 1$, and
\item $Q(L) = 0$ for any hyperplane $L$ that does not intersect $V$.
\end{enumerate}
If $Y_1,Y_2,\dotsc\iid Q$, then for any $\beta\in(0,1]$ there exists $U\subset V$ compact such that $\Capture_\beta(Y_{1:n},U)\xrightarrow[]{\mathrm{a.s.}} 1$ as $n\to\infty$.
\end{lemma}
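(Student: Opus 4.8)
The plan is to reduce the combinatorial ``every large subset'' statement to a one–dimensional, direction-by-direction statement via support functions, identify the almost-sure limiting region of subset averages as a compact convex body, and then show that this body sits strictly inside $V$. Write $\bar Y_A=\frac{1}{|A|}\sum_{j\in A}Y_j$ and, for a unit vector $u$, set $h_n(u)=\max_{A:|A|\geq\beta n}u^\t\bar Y_A$. Since a compact convex $U$ is the intersection of the half-spaces $\{z:u^\t z\le\sup_{w\in U}u^\t w\}$, the event $\{\Capture_\beta(Y_{1:n},U)=1\}$ is \emph{exactly} the event that $h_n(u)\le\sup_{w\in U}u^\t w$ for all $u\in S^{k-1}$. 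A one-line exchange argument shows that to maximize an average of the scalars $u^\t Y_j$ subject to $|A|\ge\beta n$ one takes precisely the $\lceil\beta n\rceil$ largest values, so $h_n(u)$ is the top-$\lceil\beta n\rceil$ average of $\{u^\t Y_j\}$, a maximum of finitely many linear functions of $u$. The task is thus to control $h_n(u)$ uniformly in $u$.

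To identify the limit I would use the variational (CVaR) identity $h_n(u)=\inf_{r}\big(r+\tfrac1m\sum_{j}(u^\t Y_j-r)_+\big)$ with $m=\lceil\beta n\rceil$: applying the strong law for each fixed $r$ and exploiting convexity in $r$ gives, for each fixed $u$, $h_n(u)\to h(u)$ almost surely, where $h(u)=\inf_r\big(r+\tfrac1\beta\E[(u^\t Y-r)_+]\big)$ is the top-$\beta$ average of $u^\t Y$. Both $h_n$ and $h$ are finite, positively homogeneous and convex in $u$ (each is a maximum, resp.\ supremum, of linear functions), hence support functions of compact convex bodies. Taking a countable dense set of directions, and invoking the convex-analytic fact that pointwise convergence of finite convex functions forces uniform convergence on compacta, I would upgrade this to $\sup_{u\in S^{k-1}}|h_n(u)-h(u)|\to 0$ almost surely. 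Let $K=\{z:u^\t z\le h(u)\ \forall u\}$ be the compact convex body with support function $h$.

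The crux is to show $K\subset V$, and this is where Conditions (2) and (3) enter. Condition (1) gives $h(u)\le\tfrac1\beta\E|Y|<\infty$, so $K$ is bounded, hence compact. Condition (2) gives $u^\t Y\le\sup_{w\in V}u^\t w$ almost surely, whence $h(u)\le\sup_{w\in V}u^\t w$ and $K\subseteq\bar V$. For strictness: if $h(u)$ equalled the finite value $\sup_{w\in V}u^\t w$, the top $\beta$-fraction of $u^\t Y$ would have to sit at that maximum, forcing mass at least $\beta$ on the supporting hyperplane $\{z:u^\t z=\sup_{w\in V}u^\t w\}$, which does not meet the open set $V$ --- contradicting Condition (3). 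So $h(u)<\sup_{w\in V}u^\t w$ in every direction with finite support value; a supporting-hyperplane argument then shows $K$ meets no boundary point of $V$, and since $K\subseteq\bar V$ and $V$ is the interior of $\bar V$, we conclude $K\subset V$. Choosing $\epsilon>0$ small enough that $U=K+\epsilon\bar B\subset V$ (possible as $K$ is compact and $V$ open, with $\bar B$ the closed unit ball), the support function of $U$ is $h(u)+\epsilon$ on $S^{k-1}$, so uniform convergence yields $h_n(u)<h(u)+\epsilon$ for all $u$ once $n$ is large, i.e.\ $\Capture_\beta(Y_{1:n},U)=1$ eventually, almost surely.

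The main obstacle is the uniform-in-direction control of $h_n$: pointwise almost-sure convergence is a routine strong law, but promoting it to uniform convergence over $S^{k-1}$ --- so that a \emph{single} compact $U$ simultaneously captures all exponentially many large subsets --- while at the same time pinning the limiting body $K$ strictly inside the open set $V$ through Condition (3), is the delicate step. Viewing $h_n$ and $h$ as support functions is what makes both the uniformization and the strict separation from $\partial V$ tractable.
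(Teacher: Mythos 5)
Your proof is correct in all essentials. The strategy --- reduce membership in a compact convex $U$ to a family of half-space constraints indexed by unit directions $u$, identify $\max_{|A|\ge\beta n}u^\t\bar Y_A$ as the top-$\lceil\beta n\rceil$ average of the projections $u^\t Y_j$, send it to its population limit $h(u)$ (the level-$\beta$ superquantile), and use Condition (3) to push the limiting convex body $K$ strictly inside $V$ --- is the same direction-by-direction reduction that underlies the paper's argument, which is relegated to the supplementary material and described there only as ``rather long.'' What your packaging buys is a shorter uniformization step: rather than an explicit finite covering of the sphere by well-chosen half-spaces, you observe that $h_n$ and $h$ are finite, positively homogeneous convex functions, so pointwise almost-sure convergence on a countable dense set of directions upgrades automatically to uniform convergence on $S^{k-1}$, and a single $\epsilon$-enlargement $U=K+\epsilon\bar B$ then works for all directions and all large subsets simultaneously. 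Three small points deserve a line each in a full write-up, though none is a gap: (i) in the CVaR step, passing from pointwise convergence of $r\mapsto r+\tfrac{n}{\lceil\beta n\rceil}\tfrac1n\sum_j(u^\t Y_j-r)_+$ to convergence of the infima needs coercivity of the limit in $r$, which for $\beta<1$ follows from $\E(u^\t Y-r)_+\ge\E[u^\t Y]-r$ (the case $\beta=1$ is just the strong law); (ii) $K\neq\emptyset$ because $h(u)\ge u^\t\E Y$, so $\E Y\in K$; (iii) the final separation uses that an open convex $V$ satisfies $V=\mathrm{int}\,\bar V$ and that a supporting hyperplane of $\bar V$ at a point of $K\cap\partial V$ cannot meet $V$, which is exactly where the mass-at-least-$\beta$ conclusion collides with Condition (3).
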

\begin{proof}
The proof is rather long, but not terribly difficult. For details, see the \ref{supplementary-material}.
\end{proof}

\begin{proposition}
\label{proposition:singleton-helper}
Let $Z_1,Z_2,\dotsc\in\R^k$ be i.i.d.. If $\beta\in(0,1]$ and $U\subset\R^k$ such that $\P(Z_j\not\in U)<\beta/2$, then $\Capture_\beta(Y_{1:n},[\tfrac{1}{2},1])\xrightarrow[]{\mathrm{a.s.}} 1$ as $n \to\infty$, where $Y_j = {I(Z_j\in U)}$.
\end{proposition}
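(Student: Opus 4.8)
The plan is to reduce the statement to the strong law of large numbers applied to the Bernoulli indicators $Y_j = I(Z_j\in U)$, and then to control the worst-case subset average \emph{uniformly} over all large subsets by a single quantity: the total number of ``bad'' indices. Because the $Y_j$ take values in $\{0,1\}$, this is genuinely simpler than the full Capture lemma (Lemma \ref{lemma:capture}), and the one-dimensional structure makes the extremal subset transparent.

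First I would fix $p =\P(Z_j\notin U)<\beta/2$ and let $M_n =\sum_{j=1}^n (1-Y_j)$ count the indices $j\leq n$ with $Z_j\notin U$. Since the $Z_j$, and hence the $Y_j$, are i.i.d., the strong law of large numbers gives $M_n/n\to p$ almost surely; as $p<\beta/2$, almost surely there exists $N$ with $M_n/n<\beta/2$ for all $n\geq N$. The key deterministic step is then a bound on the subset averages that does not depend on the subset: for any $A\subset\{1,\dotsc,n\}$ with $|A|\geq\beta n$, the number of indices in $A$ with $Z_j\notin U$ is at most the total $M_n$, so
$$\frac{1}{|A|}\sum_{j\in A} Y_j = 1-\frac{1}{|A|}\sum_{j\in A}(1-Y_j)\geq 1-\frac{M_n}{|A|}\geq 1-\frac{M_n}{\beta n}.$$
The crucial point is that the right-hand bound $M_n/(\beta n)$ is independent of $A$, coming only from the total count and the size constraint $|A|\geq\beta n$; this is exactly what lets me dispose of all (exponentially many) admissible subsets simultaneously.

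Combining the two steps, for $n\geq N$ and every admissible $A$ I would get $\frac{1}{|A|}\sum_{j\in A} Y_j > 1-\tfrac12 =\tfrac12$, and since $Y_j\in\{0,1\}$ forces each average into $[0,1]$, each such average lies in $(\tfrac12,1]\subseteq[\tfrac12,1]$. Hence every indicator in the product defining $\Capture_\beta(Y_{1:n},[\tfrac12,1])$ equals $1$, so the product itself equals $1$ for all $n\geq N$, yielding almost sure convergence to $1$. I do not anticipate a real obstacle: the only thing requiring care is the uniformity over subsets, and the total-count bound delivers it immediately, so the argument is essentially just the strong law plus this one inequality.
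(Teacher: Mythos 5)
Your proof is correct and is essentially identical to the paper's: both apply the strong law of large numbers to the indicators $I(Z_j\notin U)$ and then bound every subset average uniformly by $1-\frac{1}{\beta n}\sum_{j=1}^n I(Z_j\notin U)\geq 1/2$. The only cosmetic difference is that you carry a strict inequality where the paper uses a non-strict one; either suffices since the target interval is $[\tfrac12,1]$.
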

\begin{proof}
By the law of large numbers, $\frac{1}{n}\sum_{j = 1}^n I(Z_j\not\in U)\xrightarrow[]{\textup{a.s.}} \P(Z_j\not\in U) < \beta/2$. Hence, with probability $1$, for all $n$ sufficiently large, $\frac{1}{n}\sum_{j = 1}^n I(Z_j\not\in U)\leq\beta/2$ holds. When it holds, we have that for any $A\subset\{1,\dotsc,n\}$ such that $|A|\geq\beta n$,
$$\frac{1}{|A|}\sum_{j\in A} I(Z_j\in U) = 1-\frac{1}{|A|}\sum_{j\in A} I(Z_j\not\in U) \geq 1-\frac{1}{\beta n}\sum_{j = 1}^n I(Z_j\not\in U)\geq 1/2,$$
i.e. when it holds, we have $\Capture_\beta(Y_{1:n},[\tfrac{1}{2},1])= 1$.
Hence, $\Capture_\beta(Y_{1:n},[\tfrac{1}{2},1])\xrightarrow[]{\mathrm{a.s.}} 1$.
\end{proof}

In the following, $\m_x = (\xi + s(x))/(\new+1)$, as in Equation \ref{equation:m_x}.

\begin{proposition}
\label{proposition:singleton-capture}
Consider a well-behaved exponential family with conjugate prior. Let $P$ be a probability measure on $\X$ such that $\P(s(X)\in\bar\M) = 1$ when $X\sim P$. Let $X_1,X_2,\dotsc\iid P$. Then for any $\beta\in(0,1]$ there exists $U\subset\M$ compact such that $\Capture_\beta(Y_{1:n},[\tfrac{1}{2},1])\xrightarrow[]{\mathrm{a.s.}} 1$ as $n\to\infty$, where $Y_j = I(\m_{X_j}\in U)$.
\end{proposition}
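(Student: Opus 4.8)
The plan is to reduce the statement to Proposition~\ref{proposition:singleton-helper} by verifying its hypothesis for the i.i.d.\ sequence $Z_j := \m_{X_j}$. Since $X_1,X_2,\dotsc$ are i.i.d.\ and $x\mapsto\m_x = (\xi+s(x))/(\new+1)$ is a fixed measurable map into $\R^k$, the variables $Z_j=\m_{X_j}$ are i.i.d.\ in $\R^k$. With $Y_j = I(Z_j\in U) = I(\m_{X_j}\in U)$, Proposition~\ref{proposition:singleton-helper} will deliver $\Capture_\beta(Y_{1:n},[\tfrac12,1])\xrightarrow{\mathrm{a.s.}} 1$ as soon as we exhibit a compact $U\subset\M$ with $\P(\m_X\notin U)<\beta/2$. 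So the whole problem reduces to producing such a $U$.

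First I would show that $\m_X\in\M$ almost surely. Writing $\m_X = \frac{\new}{\new+1}(\xi/\new) + \frac{1}{\new+1}s(X)$ exhibits $\m_X$ as a convex combination of $\xi/\new$ and $s(X)$ with a strictly positive weight $\frac{\new}{\new+1}\in(0,1)$ on the first point. Because the family is well-behaved, $(\xi,\new)\in\Xi$, so $\xi/\new\in\M$; and by Proposition~\ref{proposition:properties}, $\M$ is the interior of the closed convex hull of $\support(\lambda s^{-1})$, hence open and convex. By hypothesis $s(X)\in\bar\M$ almost surely. The standard convexity fact (see \citet[Theorem~6.1]{Rockafellar_1970}) that a strict convex combination of an interior point of a convex set with any point of its closure again lies in the interior then yields $\m_X\in\M$ almost surely.

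Next I would use tightness from inside $\M$. Since $\M$ is open in $\R^k$, it is $\sigma$-compact: choosing, e.g., $K_m = \{y\in\M : |y|\leq m,\ d(y,\R^k\less\M)\geq 1/m\}$ gives an increasing sequence of compact subsets of $\M$ with $\bigcup_m K_m = \M$. By continuity from below of the law of $\m_X$ together with $\P(\m_X\in\M)=1$, we have $\P(\m_X\in K_m)\to 1$, so for $m$ large enough $\P(\m_X\notin K_m)<\beta/2$. Taking $U=K_m$ completes the construction and hence the reduction to Proposition~\ref{proposition:singleton-helper}.

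I expect the only genuine subtlety to be the almost-sure membership $\m_X\in\M$: the hypothesis places $s(X)$ only in the closed set $\bar\M$, so interiority must be recovered purely from the strictly positive weight on the interior point $\xi/\new$ --- it is precisely here that well-behavedness (ensuring $(\xi,\new)\in\Xi$, i.e.\ $\xi/\new\in\M$) is essential. Once that is in hand, the exhaustion step and the appeal to Proposition~\ref{proposition:singleton-helper} are routine.
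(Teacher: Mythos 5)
Your proof is correct and follows essentially the same route as the paper's: both use the Rockafellar 6.1 fact that a strict convex combination of $\xi/\new\in\M$ with a point of $\bar\M$ lies in $\M$, then produce a compact $U\subset\M$ with $\P(\m_X\notin U)<\beta/2$ and invoke Proposition~\ref{proposition:singleton-helper}. The only (immaterial) difference is in how the compact set is built --- the paper takes the closed affine image $U_0=\{(\xi+y)/(\new+1):y\in\bar\M\}\subset\M$ intersected with a large ball, while you use a generic compact exhaustion of the open set $\M$ together with continuity from below.
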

\begin{proof}
Since $\M$ is open and convex, then for any $y\in\bar\M$, $z\in\M$, and ${\rho\in(0,1)}$, we have $\rho y +(1-\rho) z\in\M$ (by e.g. \citet{Rockafellar_1970} 6.1). Taking $z =\xi/\new$ and $\rho = 1/(\new+1)$, this implies that the set $U_0 =\{(\xi+y)/(\new+1) : y\in\bar\M\}$ is contained in $\M$. Note that $U_0$ is closed and $\P(\m_X\in U_0) = \P(s(X)\in\bar\M) = 1$. Let $\beta\in(0,1]$, and choose $r\in(0,\infty)$ such that $\P(|\m_X|> r)<\beta/2$. Letting $U =\{y\in U_0:|y|\leq r\}$, we have that $U\subset\M$, and $U$ is compact. Further, $\P(\m_X\not\in U)<\beta/2$, so by applying Proposition \ref{proposition:singleton-helper} with $Z_j =\m_{X_j}$, we have $\Capture_\beta(Y_{1:n},[\tfrac{1}{2},1])\xrightarrow[]{\mathrm{a.s.}} 1$.
\end{proof}

\begin{lemma}
\label{lemma:subset-marginal-bound}
Consider a well-behaved exponential family with conjugate prior, and the resulting collection of single-cluster marginals $m(\cdot)$. Let $P$ be a probability measure on $\X$ satisfying Conditions \ref{conditions:data} (for the $s$ and $\M$ from the exponential family under consideration), and let $X_1,X_2,\dotsc\iid P$. Then for any $\beta\in(0,1]$ there exists $c\in(0,\infty)$ such that with probability $1$, for all $n$ sufficiently large, the following event holds: for every subset $J\subset\{1,\dotsc,n\}$ such that $|J|\geq\beta n$, there exists $K\subset J$ such that $|K|\geq\frac{1}{2}|J|$ and for any $j\in K$,
$$ m(X_J)\leq c\, m(X_{J\less j})\,m(X_j). $$
\end{lemma}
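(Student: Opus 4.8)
The plan is to apply the splitting inequality (Proposition \ref{proposition:splitting-inequality}) with the partition that peels off a single index: given $J$ and $j \in J$, take $A = J \less j$ and $B = \{j\}$ (relabeling $J$ as the index set to which the inequality is applied, which is harmless since the constant $C$ there depends only on the compact set and the family). With $a = \new + |J| - 1$ and $b = \new + 1$, the inequality yields
$$\frac{m(X_J)}{m(X_{J\less j})\,m(X_j)} \leq C\Big(\frac{(\new+|J|-1)(\new+1)}{\new+|J|}\Big)^{k/2} \leq C\,(\new+1)^{k/2},$$
since $(\new+|J|-1)/(\new+|J|) \leq 1$. The right-hand side is a constant, independent of $n$, $J$, and $j$: this is the crucial observation, and it is precisely why we split off a \emph{singleton} rather than a balanced piece (a balanced split would leave a factor growing with $n$). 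So it suffices to set $c = C(\new+1)^{k/2}$ and to arrange, for at least half the $j \in J$, that the two membership hypotheses of the splitting inequality hold, namely $\frac{1}{|A|}\sum_{i \in A} s(X_i) \in U$ and $\m_{X_j} \in U$ for a single compact $U \subset \M$.

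These two hypotheses are delivered by the two capture results. For the first, I would apply the Capture lemma (Lemma \ref{lemma:capture}) to $Y_j = s(X_j)$ with $V = \M$ and $Q$ the law of $s(X)$; its three hypotheses are exactly Conditions \ref{conditions:data}(1)--(3), and $\M$ is open and convex. Applying it with $\beta/2$ in place of $\beta$ produces a compact $U_1 \subset \M$ such that, almost surely for all large $n$, every subset of $\{1,\dotsc,n\}$ of size at least $\tfrac{\beta}{2}n$ has its average of $s$-values in $U_1$. Since $|A| = |J|-1 \geq \beta n - 1 \geq \tfrac{\beta}{2}n$ for $n \geq 2/\beta$, this controls the $A$-average for \emph{every} $j \in J$. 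For the second, Proposition \ref{proposition:singleton-capture} (with parameter $\beta$) supplies a compact $U_2 \subset \M$ such that, almost surely for all large $n$, every $J$ with $|J| \geq \beta n$ satisfies $\frac{1}{|J|}\sum_{j \in J} I(\m_{X_j} \in U_2) \geq \tfrac12$. Hence the set $K := \{j \in J : \m_{X_j} \in U_2\}$ has $|K| \geq \tfrac12 |J|$, with $\m_{X_j} \in U_2$ for every $j \in K$.

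To finish, I would take $U$ to be the convex hull of $U_1 \cup U_2$, which is compact and, by convexity of $\M$, contained in $\M$; let $C$ be the splitting-inequality constant for this $U$, so $c = C(\new+1)^{k/2}$. On the full-probability event where both capture conclusions hold, and for $n$ large, every $J$ with $|J| \geq \beta n$ admits the set $K$ above with $|K| \geq \tfrac12|J|$; for each $j \in K$ both hypotheses of Proposition \ref{proposition:splitting-inequality} hold with this $U$ (the $A$-average lies in $U_1 \subset U$, and $\m_{X_j} \in U_2 \subset U$), giving $m(X_J) \leq c\, m(X_{J \less j})\, m(X_j)$. The only substance beyond bookkeeping is the uniform-constant observation above and the matching of the two capture statements to the two \emph{separate} hypotheses of the splitting inequality (one on a raw $s$-average over $J\less j$, the other on a single-point posterior mean $\m_{X_j}$); merging their compact sets into one $U \subset \M$ via the convex hull is what lets a single application of the splitting inequality close the argument.
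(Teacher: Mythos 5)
Your proof is correct and follows essentially the same route as the paper's: split off a singleton via Proposition \ref{proposition:splitting-inequality} to get the uniform constant $c = C(\new+1)^{k/2}$, use Lemma \ref{lemma:capture} at level $\beta/2$ to control the $(J\less j)$-averages and Proposition \ref{proposition:singleton-capture} to guarantee $|K|\geq\tfrac12|J|$. The only cosmetic difference is that you take the convex hull of $U_1\cup U_2$ where the paper takes the plain union (either is a compact subset of $\M$, which is all the splitting inequality requires).
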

\begin{proof}
Let $\beta\in (0,1]$. Since $\M$ is open and convex, and Conditions \ref{conditions:data} hold by assumption, then by Lemma \ref{lemma:capture} (with $V =\M$) there exists $U_1\subset\M$ compact such that $\Capture_{\beta/2}(s(X_{1:n}),U_1)\xrightarrow[]{\mathrm{a.s.}} 1$ as $n\to\infty$, where $s(X_{1:n}) =(s(X_1),\dotsc,s(X_n))$. By Proposition \ref{proposition:singleton-capture} above, there exists $U_2\subset\M$ compact such that ${\Capture_\beta(Y_{1:n},[\tfrac{1}{2},1])\xrightarrow[]{\mathrm{a.s.}} 1}$ as $n\to\infty$, where $Y_j = I(\m_{X_j}\in U_2)$. Hence,
$$\Capture_{\beta/2}(s(X_{1:n}),U_1)\,\Capture_\beta(Y_{1:n},[\tfrac{1}{2},1])\xrightarrow[n\to\infty]{\mathrm{a.s.}} 1. $$
Choose $C\in(0,\infty)$ according to Proposition \ref{proposition:splitting-inequality} applied to $U: = U_1\cup U_2$. We will prove the result with $c= (\new+1)^{k/2} C$. (Recall that $k$ is the dimension of $s:\X\to\R^k$.)

Let $n$ large enough that $\beta n\geq 2$, and suppose that $\Capture_{\beta/2}(s(X_{1:n}),U_1)= 1$ and $\Capture_\beta(Y_{1:n},[\tfrac{1}{2},1])= 1$. Let $J\subset\{1,\dotsc,n\}$ such that $|J|\geq\beta n$. Then for any $j\in J$,
$$\frac{1}{|J\less j|}\sum_{i\in J\less j} s(X_i)\in U_1\subset U$$
since $\Capture_{\beta/2}(s(X_{1:n}),U_1)= 1$ and $|J\less j|\geq |J|/2\geq (\beta /2) n$.
Hence, for any $j\in K$, where $K =\{j\in J:\m_{X_j}\in U\}$, we have
$$\frac{m(X_J)}{m(X_{J\less j})\,m(X_j)}\leq C\,\,\Big(\frac{(\new+|J|-1)(\new+1)}{\new+|J|}\Big)^{k/2}\leq C\,(\new+1)^{k/2} = c$$
by our choice of $C$ above, and
$$\frac{|K|}{|J|}\geq\frac{1}{|J|}\sum_{j\in J} I(\m_{X_j}\in U_2) =\frac{1}{|J|}\sum_{j\in J} Y_j\geq 1/2$$
since $\Capture_\beta(Y_{1:n},[\tfrac{1}{2},1])= 1$ and $|J|\geq\beta n$.
\end{proof}

\appendix

\section{}
\label{section:appendix-proofs}

\begin{proof}[Proof of Proposition \ref{proposition:Gibbs-condition-applies}]
There are two cases: (A) $\apy\in[0,1)$ and $\vartheta>-\apy$, or (B) $\apy<0$ and $\vartheta = N|\apy|$. In either case, $\apy<1$, so
$$\frac{w_n(a)}{a w_n(a-1) w_n(1)} = \frac{1-\apy +a -2}{a} \leq\frac{1-\apy}{2} +1$$
whenever $n\geq 2$ and $a\in\{2,\dotsc,n\}$, and hence $\limsup_n c_{w_n}<\infty$.

For any $n>t\geq 1$, in case (A) we have
$$\frac{v_n(t)}{v_n(t+1)}=\frac{t+1}{\vartheta + t \apy},$$
and the same holds in case (B) if also $t<N$. Meanwhile, whenever $N<t<n$ in case (B), $v_n(t)/v_n(t+1)= 0/0 = 0$ by convention. Therefore, $\limsup_n c_{v_n}(t)<\infty$ in either case, for any $t\in\{1,2,\dotsc\}$ except $t = N$ in case (B).
\end{proof}

\begin{proof}[Proof of Theorem \ref{theorem:main}]
Let $t\in\{1,2,\dotsc\}$, and assume Conditions \ref{condition:Gibbs} and \ref{condition:phi} hold. Let $x_1,x_2,\dotsc\in\X$, and suppose $\sup_{c \in [0,\infty)} \liminf_n \phi_t(x_{1:n},c)>0$ (which occurs with probability $1$). We show that this implies $\limsup_n p(T_n = t\mid x_{1:n})<1$, proving the theorem.

Let $\a\in(0,\infty)$ such that $\limsup_n c_{w_n}<\a$ and $\limsup_n c_{v_n}(t)<\a$.
Choose $c \in [0,\infty)$ and $\epsilon\in(0,1)$ such that $\liminf_n \phi_t(x_{1:n},c)>\epsilon$.
Choose $N>2 t/\epsilon$ large enough that for any $n>N$ we have $c_{w_n}<\a$, $c_{v_n}(t)<\a$, and $\phi_t(x_{1:n},c)>\epsilon$.
Then by Lemma \ref{lemma:main}, for any $n>N$,
$$ p(T_n = t\mid x_{1:n})\leq\frac{C_t(x_{1:n},c)}{1+ C_t(x_{1:n},c)}
\leq\frac{2 t c \a^2/\epsilon}{1+2 t c \a^2/\epsilon},$$
since $\phi_t(x_{1:n},c)-t/n>\epsilon-\epsilon/2 =\epsilon/2$ (and $y\mapsto y/(1+ y)$ is monotone increasing on $[0,\infty)$). Since this upper bound does not depend on $n$ (and is less than $1$), then $\limsup_n p(T_n = t\mid x_{1:n})<1$.
\end{proof}

\section{Exponential family properties}
\label{section:exponential-properties}

We note some well-known properties of exponential families satisfying Conditions \ref{conditions:regularity}.
For a general reference on this material, see \citet{Hoffmann_1994}.
Let $S_\lambda(s) = \support(\lambda s^{-1})$, that is,
$$ S_\lambda(s) =\big\{z\in\R^k:\lambda(s^{-1}(U)) \neq 0 
    \,\mbox{ for every neighborhood }\,U \mbox{ of } z\big\}. $$
Let $C_\lambda(s)$ be the closed convex hull of $S_\lambda(s)$ (that is, the intersection of all closed convex sets containing it).
Given $U\subset\R^k$, let $ U^\circ$ denote its interior.
Given a (sufficiently smooth) function $f:\R^k\to\R$, we use $f'$ to denote its gradient, that is,
$f'(x)_i=\frac{\partial f}{\partial x_i}(x)$,
and $f''(x)$ to denote its Hessian matrix, that is,
$f''(x)_{ij} = \frac{\partial^2 f}{\partial x_i \partial x_j} (x)$.

\begin{proposition}
\label{proposition:properties}
If Conditions \ref{conditions:regularity} are satisfied, then:
\begin{enumerate}
\item $\kappa$ is $C^\infty$ smooth and strictly convex on $\Theta$,
\item $\kappa'(\theta) =\E s(X)$ and $\kappa''(\theta) =\Cov s(X)$ when $\theta\in\Theta$ and $X\sim P_\theta$,
\item $\kappa''(\theta)$ is symmetric positive definite for all $\theta\in\Theta$,
\item $\kappa':\Theta\to\M$ is a $C^\infty$ smooth bijection,
\item $\kappa'^{-1}: \M\to\Theta$ is $C^\infty$ smooth,
\item $\Theta$ is open and convex,
\item $\M$ is open and convex,
\item $\M = C_\lambda(s)^\circ$ and $\bar \M = C_\lambda(s)$, and
\item $\kappa'^{-1}(\mu) =\argmax_{\theta\in\Theta} (\theta^\t\mu-\kappa(\theta))$ for all $\mu\in \M$. The maximizing $\theta\in\Theta$ always exists and is unique.
\end{enumerate}
\end{proposition}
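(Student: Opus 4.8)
The plan is to derive (1)--(9) from two analytic workhorses --- differentiation under the integral sign and the inverse function theorem --- together with the identifiability hypothesis, reserving the one genuinely geometric statement, (8), for separate treatment. First I would dispose of the convexity half of (6): writing $\Lambda(\theta)=\int_\X\exp(\theta^\t s(x))\,d\lambda(x)$, H\"older's inequality shows $\Lambda$ is log-convex on $\R^k$, so its effective domain $\{\Lambda<\infty\}=\{\kappa<\infty\}=\Theta$ is convex and $\kappa=\log\Lambda$ is convex; openness of $\Theta$ is Conditions \ref{conditions:regularity}(3). Because $\Theta$ is open, finiteness of $\Lambda$ on a neighborhood of each point lets me differentiate $\kappa$ infinitely often under the integral (with locally dominated integrands), giving the $C^\infty$ smoothness in (1) and, by computing the first two derivatives, the formulas $\kappa'(\theta)=\E_\theta s(X)$ and $\kappa''(\theta)=\Cov_\theta s(X)$ of (2).

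Property (3) is where identifiability enters. The matrix $\kappa''(\theta)=\Cov_\theta s(X)$ is automatically symmetric positive semidefinite; to rule out degeneracy, suppose $v^\t\kappa''(\theta)v=\Var_\theta(v^\t s(X))=0$ for some $v\neq 0$. Then $v^\t s(x)$ equals a constant $\lambda$-almost everywhere (since $p_\theta>0$ everywhere, $P_\theta$ and $\lambda$ share null sets), so for small $\epsilon>0$ the ratio $p_{\theta+\epsilon v}/p_\theta=\exp(\epsilon v^\t s-(\kappa(\theta+\epsilon v)-\kappa(\theta)))$ is $\lambda$-a.e.\ constant, and integrating forces $P_{\theta+\epsilon v}=P_\theta$ with $\theta+\epsilon v\in\Theta$, contradicting identifiability. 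Positive-definiteness of the Hessian then upgrades (1) to strict convexity. Consequently $\kappa'$ is injective, and it is onto $\M$ by the definition $\M=\kappa'(\Theta)$, giving the bijection in (4); since its Jacobian $\kappa''$ is everywhere nonsingular, the inverse function theorem makes $\kappa'$ a global $C^\infty$ diffeomorphism onto its image, yielding the smoothness in (4) and all of (5), and --- being an open map --- showing $\M=\kappa'(\Theta)$ is open, which is the openness half of (7). Finally (9) follows at once: $\theta\mapsto\theta^\t\mu-\kappa(\theta)$ is strictly concave with gradient $\mu-\kappa'(\theta)$, whose unique zero is $\kappa'^{-1}(\mu)$ (existing for $\mu\in\M$ by surjectivity), so this is the unique maximizer.

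The main obstacle is the support identity (8), $\bar\M=C_\lambda(s)$ and $\M=C_\lambda(s)^\circ$, from which the convexity half of (7) comes for free, the interior of a convex set being convex. The easy inclusion $\M\subseteq C_\lambda(s)$ is immediate, since each $\E_\theta s(X)$ is the barycentre of a distribution supported in $S_\lambda(s)$ and hence lies in its closed convex hull. The substantive direction --- that every $\mu\in C_\lambda(s)^\circ$ is actually attained as a mean $\kappa'(\theta)$ --- is the classical theorem that the moment space of a regular exponential family is the interior of the convex support. I would prove it variationally: for interior $\mu$ one shows the strictly concave objective $\theta\mapsto\theta^\t\mu-\kappa(\theta)$ is coercive on $\Theta$, using that $\mu$ lies strictly inside the convex support to control the objective as $\theta$ approaches $\partial\Theta$, so it attains an interior maximizer $\theta$ with $\kappa'(\theta)=\mu$. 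This boundary/coercivity analysis is the one non-routine ingredient, and I would cite Hoffmann-J\o rgensen (1994) for it rather than reproduce it; the closure statement $\bar\M=C_\lambda(s)$ then follows by taking closures, both sides being convex with the same interior.
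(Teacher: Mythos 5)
Your proposal is correct, and its skeleton matches the paper's where the paper actually argues anything: both use the inverse function theorem to get (5) and the openness half of (7) from the nonsingularity of $\kappa''$, and both ultimately defer the surjectivity of the mean map onto $C_\lambda(s)^\circ$ --- the one non-routine step in (8) --- to Hoffmann--J{\o}rgensen. The difference is one of self-containedness: the paper's proof is essentially a list of citations (HJ 8.36, 12.7, 8.36.15; Rockafellar 6.2--3), whereas you supply explicit elementary arguments for most items --- H\"older's inequality for the log-convexity of the normalizer and hence convexity of $\Theta$, differentiation under the integral sign for (1)--(2), and, most substantively, the identifiability argument showing $\Var_\theta(v^\t s(X))=0$ would force $P_{\theta+\epsilon v}=P_\theta$, which is the real content behind positive-definiteness of $\kappa''$ in (3). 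Your derivation of (9) from strict concavity plus surjectivity of $\kappa'$ is also cleaner than the paper's double citation. The only points at which you lean on the literature in the same way the paper does are the coercivity/boundary analysis for the hard inclusion $C_\lambda(s)^\circ\subseteq\M$ and (implicitly) the fact that a convex set with nonempty interior satisfies $\overline{C^\circ}=\bar C$, both of which are reasonable to cite. In short, your route buys a reader-verifiable proof at the cost of length; the paper buys brevity at the cost of opacity. No gaps.
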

\begin{proof}
These properties are all well-known. Let us abbreviate \citet{Hoffmann_1994} as HJ. For (1), see HJ 8.36(1) and HJ 12.7.5. For (6),(2),(3), and (4), see HJ 8.36, 8.36.2-3, 12.7(2), and 12.7.11, respectively. Item (5) and openness in (7) follow, using the inverse function theorem \citep[3.21]{Knapp_2005}. Item (8) and convexity in (7) follow, using HJ 8.36.15 and \citet{Rockafellar_1970} 6.2-3. Item (9) follows from HJ 8.36.15 and item (4).
\end{proof}

Given an exponential family with conjugate prior as in Section \ref{section:conjugate-priors}, the joint density of $x_1,\dotsc,x_n\in\X$ and $\theta\in\R^k$ is
\begin{align}\label{equation:joint-exponential}
& p_\theta(x_1) \cdots p_\theta(x_n) \pi_{\xi,\new}(\theta)\\
& =\exp\Big((\new+n)\big(\theta^\t\m_{x_{1:n}}-\kappa(\theta)\big)\Big) \exp(-\psi(\xi,\new)) \,I(\theta\in\Theta) \notag
\end{align}
where $\m_{x_{1:n}} = (\xi + \sum_{j = 1}^n s(x_j))/(\new + n)$.
The marginal density, defined as in Equation \ref{equation:marginal}, is
\begin{align}
\label{equation:exponential-marginal}
m(x_{1:n}) =\exp\Big(\psi\big(\xi +{\textstyle\sum s(x_j)}, \,\new + n\big)-\psi(\xi,\new)\Big)
\end{align}
when this quantity is well-defined.

\begin{proposition}
\label{proposition:conjugate}
If Conditions \ref{conditions:regularity} are satisfied, then:
\begin{enumerate}
\item $\psi(\xi,\new)$ is finite and $C^\infty$ smooth on $\Xi$,
\item if $s(x_1),\dotsc,s(x_n)\in S_\lambda(s)$ and $(\xi,\new)\in\Xi$, then $(\xi+\sum s(x_j),\, \new + n)\in\Xi$,
\item $\{\pi_{\xi,\new} : (\xi,\new)\in \Xi\}$ is a conjugate family for $\{p_\theta:\theta\in\Theta\}$, and
\item if $s:\X\to\R^k$ is continuous, $(\xi,\new)\in\Xi$, and $\lambda(U)\neq 0$ for any nonempty $U\subset\X$ that is open in $\X$, then $m(x_{1:n})<\infty$ for any $x_1,\dotsc,x_n\in\X$.
\end{enumerate}
\end{proposition}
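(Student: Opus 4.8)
The plan is to treat part (1) as the crux and to obtain parts (2)--(4) as short consequences of it together with the convexity facts in Proposition \ref{proposition:properties}. Write $\m=\xi/\new$, and work in the notation of Section \ref{section:marginal-inequalities}, where $f_\m(\theta)=\theta^\t\m-\kappa(\theta)$, $\G$ is the Legendre transform of $\kappa$, and $\theta_\m=\kappa'^{-1}(\m)$ is the unique maximizer of $f_\m$ (Proposition \ref{proposition:properties}(9)); these depend only on $\kappa$, so there is no circularity with $\psi$. The organizing observation is that $\{\pi_{\xi,\new}\}$ is itself an exponential family in the variable $\theta$, with base measure Lebesgue measure restricted to $\Theta$, sufficient statistic $\theta\mapsto(\theta,-\kappa(\theta))\in\R^{k+1}$, natural parameter $(\xi,\new)$, and log-partition function exactly $\psi$. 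Hence the set $D=\{(\xi,\new):\psi(\xi,\new)<\infty\}$ is convex and $\psi$ is $C^\infty$ on $D^\circ$, by differentiation under the integral as in the standard theory (\citet{Hoffmann_1994}). Moreover $(\xi,\new)\mapsto\xi/\new$ is continuous on $\{\new>0\}$ and $\M$ is open (Proposition \ref{proposition:properties}(7)), so $\Xi$ is open; thus once we show $\Xi\subseteq D$, openness of $\Xi$ forces $\Xi\subseteq D^\circ$ and part (1) follows. So everything reduces to proving $\psi(\xi,\new)<\infty$ for $(\xi,\new)\in\Xi$.

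For this finiteness I would bound $f_\m$ above by a function with integrable exponential tail, using that $\xi^\t\theta-\new\kappa(\theta)=\new f_\m(\theta)$. Since $\G=\kappa^*$, Fenchel--Young gives $f_{\m'}(\theta)\leq\G(\m')$ for every $\m'\in\M$. Choose $r_0>0$ with $\bar B(\m,r_0)\subset\M$ (possible as $\M$ is open). For $\theta\neq\theta_\m$ set $u=(\theta-\theta_\m)/|\theta-\theta_\m|$, so $\m+r_0u\in\M$ and $u^\t\theta=u^\t\theta_\m+|\theta-\theta_\m|$; then
$$f_\m(\theta)=f_{\m+r_0u}(\theta)-r_0u^\t\theta\leq\G(\m+r_0u)-r_0u^\t\theta_\m-r_0|\theta-\theta_\m|\leq M-r_0|\theta-\theta_\m|,$$
where $M=r_0|\theta_\m|+\max_{|u|=1}\G(\m+r_0u)<\infty$ by continuity of $\G$ on $\M$ and compactness of the unit sphere. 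Integrating yields $\int_\Theta e^{\new f_\m(\theta)}\,d\theta\leq e^{\new M}\int_{\R^k}e^{-\new r_0|\theta-\theta_\m|}\,d\theta<\infty$, so $\psi(\xi,\new)<\infty$. This step, essentially the Diaconis--Ylvisaker propriety condition, is the one genuinely nontrivial part of the proposition; the remaining parts are bookkeeping.

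For part (2), I would write
$$\frac{\xi+\sum_{j=1}^n s(x_j)}{\new+n}=\frac{\new}{\new+n}\,\frac{\xi}{\new}+\sum_{j=1}^n\frac{1}{\new+n}\,s(x_j),$$
a convex combination (positive weights summing to $1$) of $\xi/\new\in\M$ and the points $s(x_j)\in S_\lambda(s)\subseteq C_\lambda(s)=\bar\M$ (Proposition \ref{proposition:properties}(8)). Since $\M$ is open and convex and the weight $\new/(\new+n)$ on the interior point $\xi/\new$ is positive, the combination lies in $\M$ (\citet{Rockafellar_1970} 6.1, as invoked in the proof of Proposition \ref{proposition:singleton-capture}); hence $(\xi+\sum_j s(x_j),\new+n)\in\Xi$. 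Part (3) is then the direct computation
$$\Big(\prod_{j=1}^n p_\theta(x_j)\Big)\pi_{\xi,\new}(\theta)\propto\exp\Big(\big(\xi+{\textstyle\sum_j s(x_j)}\big)^\t\theta-(\new+n)\kappa(\theta)\Big)\,I(\theta\in\Theta)\propto\pi_{\xi',\new'}(\theta)$$
with $\xi'=\xi+\sum_j s(x_j)$, $\new'=\new+n$; for $P_{\theta}$-almost every sample one has $s(x_j)\in S_\lambda(s)$, so parts (2) and (1) give $(\xi',\new')\in\Xi$ and the posterior is the proper member $\pi_{\xi',\new'}$, establishing conjugacy.

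Finally, for part (4) I would first note that the hypotheses force $s(\X)\subseteq S_\lambda(s)$: for any $x\in\X$ and any neighborhood $V$ of $s(x)$, continuity of $s$ makes $s^{-1}(V)$ a nonempty set open in $\X$, whence $\lambda(s^{-1}(V))\neq0$ and $s(x)\in S_\lambda(s)$. Thus every $s(x_j)\in S_\lambda(s)$, so part (2) gives $(\xi+\sum_j s(x_j),\new+n)\in\Xi$ and part (1) gives $\psi(\xi+\sum_j s(x_j),\new+n)<\infty$; combining this with $m(x_{1:n})=\exp\big(\psi(\xi+\sum_j s(x_j),\new+n)-\psi(\xi,\new)\big)$ (Equation \ref{equation:exponential-marginal}) shows $m(x_{1:n})<\infty$ for \emph{all} $x_1,\dotsc,x_n\in\X$, not merely almost all. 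The main obstacle throughout is the finiteness estimate in part (1); once it is available, parts (2)--(4) are immediate from convexity of $\M$ and the conjugacy computation.
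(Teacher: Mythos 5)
Your proof is correct, and for parts (2)--(4) it follows essentially the same route as the paper: the same convex-combination argument via Proposition \ref{proposition:properties}(7)--(8) and \citet{Rockafellar_1970} 6.1 for (2), the same posterior computation from Equations \ref{equation:joint-exponential} and \ref{equation:exponential-marginal} for (3), and the same observation that continuity of $s$ plus the non-degeneracy of $\lambda$ forces $s(\X)\subset S_\lambda(s)$ for (4). The one genuine difference is in part (1): the paper simply cites \citet{Diaconis_1979}, Theorem 1, for the finiteness of $\psi$ on $\Xi$, whereas you reprove that fact from scratch via the Fenchel--Young bound $f_{\m+r_0u}(\theta)\leq\G(\m+r_0u)$ and the resulting linear decay estimate $f_\m(\theta)\leq M-r_0|\theta-\theta_\m|$, which gives an integrable exponential envelope. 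Your estimate is sound (the radius $r_0$ exists because $\M$ is open, $M$ is finite by continuity of $\G$ on the compact sphere $\{\m+r_0u:|u|=1\}\subset\M$, and $\new>0$ makes $e^{-\new r_0|\cdot|}$ integrable over $\R^k$), and it buys self-containedness at the cost of a few lines; the paper's citation buys brevity. Your handling of smoothness --- viewing $\{\pi_{\xi,\new}\}$ as an exponential family in $\theta$ with log-partition function $\psi$, so that $\psi$ is $C^\infty$ on the interior of its finiteness domain, combined with openness of $\Xi$ --- is exactly the paper's reasoning (``smoothness holds for the same reason that $\kappa$ is smooth''). No gaps.
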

\begin{proof}
(1) For finiteness, see \citet{Diaconis_1979}, Theorem 1. 
Smoothness holds for the same reason that $\kappa$ is smooth \citep[8.36(1)]{Hoffmann_1994}. (Note that $\Xi$ is open in $\R^{k +1}$, since $\M$ is open in $\R^k$.)

(2) Since $C_\lambda(s)$ is convex, $\frac{1}{n}\sum s(x_j)\in C_\lambda(s)$. Since $C_\lambda(s) =\bar\M$ and $\M$ is open and convex (\ref{proposition:properties}(7) and (8)), then $(\xi+\sum s(x_j))/(\new + n)\in\M$, as a (strict) convex combination of $\frac{1}{n}\sum s(x_j)\in\bar\M$ and $\xi/\new\in\M$ \citep[6.1]{Rockafellar_1970}.

(3) Let $(\xi,\new)\in\Xi$, $\theta\in\Theta$. If $X_1,\dotsc,X_n\iid P_\theta$ then $s(X_1),\dotsc,s(X_n)\in S_\lambda(s)$ almost surely, and thus $(\xi+\sum s(X_j),\, \new + n)\in\Xi$ (a.s.) by (2). By Equations \ref{equation:joint-exponential} and \ref{equation:exponential-marginal}, the posterior is $\pi_{\xi+\sum s(X_j),\, \new + n}$.

(4) The assumptions imply $\{s(x): x\in\X\}\subset S_\lambda(s)$, and therefore, for any $x_1,\dotsc,x_n\in\X$, we have $(\xi+\sum s(x_j),\, \new + n)\in\Xi$ by (2). Thus, by (1) and Equation \ref{equation:exponential-marginal}, $m(x_{1:n})<\infty$.
\end{proof}

It is worth mentioning that while $\Xi\subset\big\{(\xi,\new)\in\R^{k+1} : \psi(\xi,\new)<\infty\big\}$, it may be a strict subset --- often, $\Xi$ is not quite the full set of parameters on which $\pi_{\xi,\new}$ can be defined. 


\section{Bounds on the Laplace approximation}
\label{section:Laplace-bounds}

Our proof uses the following simple bounds on the Laplace approximation. These bounds are not fundamentally new, but the precise formulation we require does not seem to appear in the literature, so we have included it for the reader's convenience. Lemma \ref{lemma:Laplace-bounds} is simply a multivariate version of the bounds given by \citet{deBruijn_1970}, and Corollary \ref{corollary:Laplace-bounds} is a straightforward consequence, putting the lemma in a form most convenient for our purposes.

Given symmetric matrices $A$ and $B$, let us write $A\before B$ to mean that $B-A$ is positive semidefinite. Given $A\in\R^{k\times k}$ symmetric positive definite and $\epsilon,t\in(0,\infty)$, define
$$ C(t,\epsilon,A) =\P(|A^{-1/2} Z|\leq\epsilon\sqrt{t}) $$
where $Z\sim\mathrm{Normal}(0,I_{k\times k})$. Note that $C(t,\epsilon,A)\to 1$ as $t\to\infty$. Let
$B_\epsilon(x_0) =\{x\in\R^k:|x-x_0|\leq\epsilon\}$
denote the closed ball of radius $\epsilon>0$ at $x_0\in\R^k$.

\begin{lemma}
\label{lemma:Laplace-bounds}
Let $E\subset\R^k$ be open. Let $f:E\to\R$ be $C^2$ smooth with $f'(x_0) = 0$ for some $x_0\in E$. Define
$$ g(t) =\int_E \exp(-t f(x))\,dx $$
for $t\in(0,\infty)$. Suppose $\epsilon\in(0,\infty)$ such that $B_\epsilon(x_0)\subset E$, $0<\d\leq\inf\{f(x)-f(x_0): x\in E\less B_\epsilon(x_0)\}$, and $A,B$ are symmetric positive definite matrices such that $A\before f''(x)\before B$ for all $x\in B_\epsilon(x_0)$. Then for any $0<s\leq t$ we have
$$\frac{C(t,\epsilon,B)}{|B|^{1/2}}
\leq\frac{g(t)}{(2\pi/t)^{k/2} e^{-t f(x_0)}}
\leq\frac{C(t,\epsilon,A)}{|A|^{1/2}} + \Big(\frac{t}{2\pi}\Big)^{k/2} e^{-(t-s)\d} e^{s f(x_0)} g(s)$$
where $|A|=|\det A|$.
\end{lemma}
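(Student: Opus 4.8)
The plan is to split the domain as $E = B_\epsilon(x_0) \cup (E \setminus B_\epsilon(x_0))$ and treat the two pieces separately: on the ball a second-order Taylor sandwich reduces everything to Gaussian integrals, while on the complement the assumed gap $\d$ forces exponentially small contributions that I can trade against $g(s)$. It is harmless to carry $f(x_0)$ along explicitly throughout (equivalently, one could subtract it off and restore it at the end), since it only ever appears through the normalizing factor $e^{-t f(x_0)}$.

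First I would establish the pointwise sandwich on the ball. Fix $x \in B_\epsilon(x_0)$ and apply Taylor's theorem to the scalar function $r \mapsto f(x_0 + r(x-x_0))$ on $[0,1]$: since $f'(x_0)=0$, there is a point $\zeta$ on the segment from $x_0$ to $x$ with $f(x) = f(x_0) + \tfrac{1}{2}(x-x_0)^\t f''(\zeta)(x-x_0)$. Because $B_\epsilon(x_0)$ is convex, $\zeta \in B_\epsilon(x_0)$, so the hypothesis $A \before f''(\zeta) \before B$ applies and yields
$$f(x_0) + \tfrac{1}{2}(x-x_0)^\t A (x-x_0) \le f(x) \le f(x_0) + \tfrac{1}{2}(x-x_0)^\t B (x-x_0).$$
Exponentiating (with the sign reversed) and integrating over the ball reduces both bounds to truncated Gaussian integrals. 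The clean computation is the change of variables $y = \sqrt{t}\,M^{1/2}(x-x_0)$, which gives, for any symmetric positive definite $M$,
$$\int_{B_\epsilon(x_0)} \exp\!\Big(-\tfrac{t}{2}(x-x_0)^\t M (x-x_0)\Big)\,dx = \Big(\tfrac{2\pi}{t}\Big)^{k/2}\frac{C(t,\epsilon,M)}{|M|^{1/2}},$$
since the image of the ball is exactly $\{y:|M^{-1/2}y| \le \epsilon\sqrt{t}\}$ and the integrand becomes the standard Gaussian density. Taking $M = B$ for the lower bound and $M = A$ for the upper bound produces the two $C(t,\epsilon,\cdot)/|\cdot|^{1/2}$ terms.

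For the lower bound I would simply discard the tail by nonnegativity: $g(t) \ge \int_{B_\epsilon(x_0)} e^{-t f} \ge e^{-t f(x_0)}(2\pi/t)^{k/2} C(t,\epsilon,B)/|B|^{1/2}$, which rearranges to the left inequality. For the upper bound I add the tail back and control it by splitting the exponent as $t f = (t-s) f + s f$; on $E \setminus B_\epsilon(x_0)$ the gap hypothesis gives $f(x) \ge f(x_0) + \d$, so $(t-s) f(x) \ge (t-s)(f(x_0)+\d)$ (here $t \ge s$ is used to preserve the inequality), whence $\int_{E \setminus B_\epsilon(x_0)} e^{-t f} \le e^{-(t-s)(f(x_0)+\d)} \int_E e^{-s f} = e^{-(t-s)(f(x_0)+\d)} g(s)$. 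Adding this to the ball bound with $M=A$ and dividing through by $(2\pi/t)^{k/2} e^{-t f(x_0)}$, a short exponent computation, namely $-(t-s)(f(x_0)+\d) + t f(x_0) = s f(x_0) - (t-s)\d$, collapses the tail coefficient to exactly $(t/2\pi)^{k/2} e^{-(t-s)\d} e^{s f(x_0)}$, giving the right inequality.

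I expect the only genuinely delicate point to be the Taylor sandwich: it is essential that the intermediate point $\zeta$ lands inside $B_\epsilon(x_0)$, which is precisely why the Hessian bound is hypothesized on the whole ball and why the convexity of the ball must be invoked to keep the connecting segment inside it. Everything else — the Gaussian change of variables and the exponent bookkeeping — is routine, and the role of the auxiliary parameter $s$ is purely to convert the far-field decay into a finite multiple of $g(s)$ rather than an unbounded integral.
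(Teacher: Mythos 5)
Your proof is correct and follows essentially the same route the paper takes (the paper describes its supplementary proof as ``a straightforward application of Taylor's theorem,'' a multivariate version of de Bruijn's bounds): a second-order Taylor sandwich with Lagrange remainder on $B_\epsilon(x_0)$ reduced to truncated Gaussian integrals, plus the split $tf=(t-s)f+sf$ on the complement to trade the gap $\d$ against $g(s)$. The exponent bookkeeping and the change of variables both check out, so there is nothing to add.
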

\begin{remark}
In particular, these assumptions imply $f$ is strictly convex on $B_\epsilon(x_0)$ with unique global minimum at $x_0$. Note that the upper bound is trivial unless $g(s)<\infty$.
\end{remark}
\begin{proof}
This is a straightforward application of Taylor's theorem; see the \ref{supplementary-material}.
\end{proof}

The following corollary tailors the lemma to our purposes. Given a symmetric positive definite matrix $A\in\R^{k\times k}$, let $\lambda_*(A)$ and $\lambda^*(A)$ be the minimal and maximal eigenvalues, respectively. By diagonalizing $A$, it is easy to check that $\lambda_*(A) I_{k\times k}\before A\before\lambda^*(A) I_{k\times k}$ and $\lambda_*(A)^k\leq|A|\leq\lambda^*(A)^k$.

\begin{corollary}
\label{corollary:Laplace-bounds}
For any $\a,\beta,\gamma,\d,\epsilon\in(0,\infty)$ there exist $c_1 = c_1(\beta,\epsilon)\in(0,\infty)$ and $c_2 = c_2(\a,\gamma,\d)\in(0,\infty)$ such that if $E,f,x_0,A,B$ satisfy all the conditions of Lemma \ref{lemma:Laplace-bounds} (for this choice of $\d,\epsilon$) and additionally, $\a\leq\lambda_*(A)$, $\beta\geq\lambda^*(B)$, and $\gamma\geq e^{f(x_0)} g(1)$, then
$$c_1\leq\frac{\int_E \exp(-t f(x)) \,dx}{t^{-k/2}\exp(-t f(x_0))} \leq c_2 $$
for all $t\geq 1$.
\end{corollary}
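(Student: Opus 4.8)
The plan is to invoke Lemma~\ref{lemma:Laplace-bounds} a single time, with $s = 1$, and then bound each side of the resulting two-sided estimate uniformly over $t \ge 1$. Taking $s = 1$ is legitimate: the hypothesis $\gamma \ge e^{f(x_0)}g(1)$ forces $g(1) \le \gamma\, e^{-f(x_0)} < \infty$ (so the upper bound in the lemma is non-trivial), and $s = 1 \le t$ for every $t \ge 1$. Writing $g(t) = \int_E \exp(-tf(x))\,dx$ and letting $R(t) = g(t)\big/\big(t^{-k/2}e^{-tf(x_0)}\big)$ denote the ratio to be bounded, the central quantity of the lemma is exactly $(2\pi)^{-k/2}R(t)$, so the lemma (with $s=1$) reads
$$ \frac{C(t,\epsilon,B)}{|B|^{1/2}} \;\le\; (2\pi)^{-k/2} R(t) \;\le\; \frac{C(t,\epsilon,A)}{|A|^{1/2}} + \Big(\frac{t}{2\pi}\Big)^{k/2} e^{-(t-1)\d}\,e^{f(x_0)} g(1). $$

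The upper bound $c_2$ is routine. In the first term, $C(t,\epsilon,A) \le 1$ and $|A|^{1/2} \ge \lambda_*(A)^{k/2} \ge \a^{k/2}$, so after multiplying through by $(2\pi)^{k/2}$ it contributes at most $(2\pi/\a)^{k/2}$. In the second term, $e^{f(x_0)}g(1) \le \gamma$ by hypothesis, and $M(\d) := \sup_{t \ge 1} t^{k/2}e^{-(t-1)\d} < \infty$ since the map $t \mapsto t^{k/2}e^{-(t-1)\d}$ is continuous on $[1,\infty)$ and tends to $0$ as $t \to \infty$; hence this term is at most $M(\d)\,\gamma$. Thus $c_2 = (2\pi/\a)^{k/2} + M(\d)\,\gamma$ works and depends only on $\a,\gamma,\d$ (and the fixed dimension $k$).

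The lower bound is the only subtle point, and this is where I expect the real obstacle. The tempting route --- estimating $C(t,\epsilon,B) \ge C(1,\epsilon,B)$ and $|B|^{1/2} \le \beta^{k/2}$ separately --- fails, because $C(1,\epsilon,B)$ can be made arbitrarily small by giving $B$ a tiny eigenvalue, while the hypotheses only control $\lambda^*(B) \le \beta$ from above. The resolution is to keep the ratio $C(t,\epsilon,B)/|B|^{1/2}$ together: substituting $w = B^{-1/2}z$ in the Gaussian integral defining $C(t,\epsilon,B)$ turns it into
$$ \frac{C(t,\epsilon,B)}{|B|^{1/2}} = (2\pi)^{-k/2}\int_{|w|\le\epsilon\sqrt{t}} \exp\big(-\tfrac{1}{2}\,w^\t B w\big)\,dw, $$
where the Jacobian $|B|^{1/2}$ cancels the prefactor. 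Since $w^\t B w \le \lambda^*(B)|w|^2 \le \beta|w|^2$ and the domain contains $B_\epsilon(0)$ for $t \ge 1$, this is at least $(2\pi)^{-k/2}\int_{|w|\le\epsilon}\exp(-\tfrac{1}{2}\beta|w|^2)\,dw$. Multiplying by $(2\pi)^{k/2}$ gives $R(t) \ge c_1$ with $c_1 = \int_{|w|\le\epsilon}\exp(-\tfrac{1}{2}\beta|w|^2)\,dw > 0$, depending only on $\beta,\epsilon$ (and $k$). Combining the two bounds yields $c_1 \le R(t) \le c_2$ for all $t \ge 1$, as claimed.
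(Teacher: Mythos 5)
Your proof is correct and follows essentially the same route as the paper's: apply the lemma with $s=1$, bound the upper bound's first term by $1/\a^{k/2}$ and its second term by $\sup_{t\ge 1}(t/2\pi)^{k/2}e^{-(t-1)\d}\gamma$, and for the lower bound keep $C(t,\epsilon,B)/|B|^{1/2}$ together and use $w^\t Bw\le\beta|w|^2$ inside the Gaussian integral. Your explicit remark on why bounding $C(1,\epsilon,B)$ and $|B|^{1/2}$ separately would fail is a nice clarification of a point the paper leaves implicit, but the argument is the same.
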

\begin{proof}
The first term in the upper bound of the lemma is $C(t,\epsilon,A)/|A|^{1/2}\leq 1/\a^{k/2}$, and with $s = 1$ the second term is less or equal to $(t/2\pi)^{k/2} e^{-(t -1)\d} \gamma$, which is bounded above for $t\in [1,\infty)$. For the lower bound, a straightforward calculation (using $z^\t Bz\leq \lambda^*(B) z^\t z \leq \beta z^\t z$ in the exponent inside the integral) shows that $C(t,\epsilon,B)/|B|^{1/2}\geq \P(|Z|\leq\epsilon\sqrt{\beta})/\beta^{k/2}$ for $t\geq 1$.
\end{proof}


\section*{Acknowledgments}
We would like to thank Stu Geman for raising this question. We would also like to thank Erik Sudderth, Mike Hughes, Tamara Broderick, Annalisa Cerquetti, Steve MacEachern, Dan Klein, and Dahlia Nadkarni for helpful conversations.

\begin{supplement}[id=supplementary-material]
\sname{Supplementary Material}
\stitle{Supplement to ``Inconsistency of Pitman--Yor process mixtures for the number of components''}
\slink[doi]{TBD}
\sdatatype{.pdf}
\sdescription{We have placed some technical proofs in a supplementary document, \citet{supplementary}.}
\end{supplement}

\bibliography{refs}

\begin{thebibliography}{38}

\bibitem[\protect\citeauthoryear{Chen et~al.}{2009}]{Chen_2009}
\begin{barticle}[author]
\bauthor{\bsnm{Chen},~\bfnm{H.}\binits{H.}},
  \bauthor{\bsnm{Morrell},~\bfnm{P.~L.}\binits{P.~L.}},
  \bauthor{\bsnm{Ashworth},~\bfnm{V.~E. T.~M.}\binits{V.~E. T.~M.}},
  \bauthor{\bparticle{de~la} \bsnm{Cruz},~\bfnm{M.}\binits{M.}} \AND
  \bauthor{\bsnm{Clegg},~\bfnm{M.~T.}\binits{M.~T.}}
(\byear{2009}).
\btitle{Tracing the geographic origins of major avocado cultivars}.
\bjournal{Journal of Heredity}
\bvolume{100}
\bpages{56--65}.
\end{barticle}
\endbibitem

\bibitem[\protect\citeauthoryear{De~Bruijn}{1970}]{deBruijn_1970}
\begin{bbook}[author]
\bauthor{\bsnm{De~Bruijn},~\bfnm{N.~G.}\binits{N.~G.}}
(\byear{1970}).
\btitle{Asymptotic Methods in Analysis}.
\bpublisher{North-Holland Publishing Co., Amsterdam}.
\end{bbook}
\endbibitem

\bibitem[\protect\citeauthoryear{Diaconis and Ylvisaker}{1979}]{Diaconis_1979}
\begin{barticle}[author]
\bauthor{\bsnm{Diaconis},~\bfnm{P.}\binits{P.}} \AND
  \bauthor{\bsnm{Ylvisaker},~\bfnm{D.}\binits{D.}}
(\byear{1979}).
\btitle{Conjugate priors for exponential families}.
\bjournal{The Annals of Statistics}
\bvolume{7}
\bpages{269-281}.
\end{barticle}
\endbibitem

\bibitem[\protect\citeauthoryear{Escobar and West}{1998}]{Escobar_1998}
\begin{bincollection}[author]
\bauthor{\bsnm{Escobar},~\bfnm{M.~D.}\binits{M.~D.}} \AND
  \bauthor{\bsnm{West},~\bfnm{M.}\binits{M.}}
(\byear{1998}).
\btitle{Computing nonparametric hierarchical models}.
In \bbooktitle{Practical Nonparametric and Semiparametric {B}ayesian
  Statistics}
(\beditor{\bfnm{D.}\binits{D.}~\bsnm{Dey}},
  \beditor{\bfnm{P.}\binits{P.}~\bsnm{M\"{u}ller}} \AND
  \beditor{\bfnm{D.}\binits{D.}~\bsnm{Sinha}}, eds.)
\bpages{1--22}.
\bpublisher{Springer-Verlag, New York}.
\end{bincollection}
\endbibitem

\bibitem[\protect\citeauthoryear{Fearnhead}{2004}]{Fearnhead_2004}
\begin{barticle}[author]
\bauthor{\bsnm{Fearnhead},~\bfnm{P.}\binits{P.}}
(\byear{2004}).
\btitle{Particle filters for mixture models with an unknown number of
  components}.
\bjournal{Statistics and Computing}
\bvolume{14}
\bpages{11--21}.
\end{barticle}
\endbibitem

\bibitem[\protect\citeauthoryear{Ferguson}{1983}]{Ferguson_1983}
\begin{bincollection}[author]
\bauthor{\bsnm{Ferguson},~\bfnm{T.~S.}\binits{T.~S.}}
(\byear{1983}).
\btitle{Bayesian density estimation by mixtures of normal distributions}.
In \bbooktitle{Recent Advances in Statistics}
(\beditor{\bfnm{M.~H.}\binits{M.~H.}~\bsnm{Rizvi}},
  \beditor{\bfnm{J.}\binits{J.}~\bsnm{Rustagi}} \AND
  \beditor{\bfnm{D.}\binits{D.}~\bsnm{Siegmund}}, eds.)
\bpages{287-302}.
\bpublisher{Academic Press}.
\end{bincollection}
\endbibitem

\bibitem[\protect\citeauthoryear{Ghosal}{2010}]{Ghosal_2010}
\begin{bincollection}[author]
\bauthor{\bsnm{Ghosal},~\bfnm{S.}\binits{S.}}
(\byear{2010}).
\btitle{The {D}irichlet process, related priors and posterior asymptotics}.
In \bbooktitle{Bayesian {N}onparametrics}
(\beditor{\bfnm{N.~L.}\binits{N.~L.}~\bsnm{Hjort}},
  \beditor{\bfnm{C.}\binits{C.}~\bsnm{Holmes}},
  \beditor{\bfnm{P.}\binits{P.}~\bsnm{M\"{u}ller}} \AND
  \beditor{\bfnm{S.~G.}\binits{S.~G.}~\bsnm{Walker}}, eds.)
\bpages{36--83}.
\bpublisher{Cambridge University Press}.
\end{bincollection}
\endbibitem

\bibitem[\protect\citeauthoryear{Gnedin and Pitman}{2006}]{Gnedin_2006}
\begin{barticle}[author]
\bauthor{\bsnm{Gnedin},~\bfnm{A.}\binits{A.}} \AND
  \bauthor{\bsnm{Pitman},~\bfnm{J.}\binits{J.}}
(\byear{2006}).
\btitle{Exchangeable Gibbs partitions and Stirling triangles}.
\bjournal{Journal of Mathematical Sciences}
\bvolume{138}
\bpages{5674--5685}.
\end{barticle}
\endbibitem

\bibitem[\protect\citeauthoryear{Gonzalez and Zardoya}{2007}]{Gonzalez_2007}
\begin{barticle}[author]
\bauthor{\bsnm{Gonzalez},~\bfnm{E.~G.}\binits{E.~G.}} \AND
  \bauthor{\bsnm{Zardoya},~\bfnm{R.}\binits{R.}}
(\byear{2007}).
\btitle{Relative role of life-history traits and historical factors in shaping
  genetic population structure of sardines (Sardina pilchardus)}.
\bjournal{BMC evolutionary biology}
\bvolume{7}
\bpages{197}.
\end{barticle}
\endbibitem

\bibitem[\protect\citeauthoryear{Hoffmann-J{\o}rgensen}{1994}]{Hoffmann_1994}
\begin{bbook}[author]
\bauthor{\bsnm{Hoffmann-J{\o}rgensen},~\bfnm{J.}\binits{J.}}
(\byear{1994}).
\btitle{Probability with a view toward statistics}
\bvolume{2}.
\bpublisher{Chapman \& Hall}.
\end{bbook}
\endbibitem

\bibitem[\protect\citeauthoryear{Huelsenbeck and
  Andolfatto}{2007}]{Huelsenbeck_2007}
\begin{barticle}[author]
\bauthor{\bsnm{Huelsenbeck},~\bfnm{J.~P.}\binits{J.~P.}} \AND
  \bauthor{\bsnm{Andolfatto},~\bfnm{P.}\binits{P.}}
(\byear{2007}).
\btitle{Inference of population structure under a Dirichlet process model}.
\bjournal{Genetics}
\bvolume{175}
\bpages{1787--1802}.
\end{barticle}
\endbibitem

\bibitem[\protect\citeauthoryear{Ishwaran and James}{2003}]{Ishwaran_2003}
\begin{barticle}[author]
\bauthor{\bsnm{Ishwaran},~\bfnm{H.}\binits{H.}} \AND
  \bauthor{\bsnm{James},~\bfnm{L.~F.}\binits{L.~F.}}
(\byear{2003}).
\btitle{Generalized weighted {C}hinese restaurant processes for species
  sampling mixture models}.
\bjournal{Statistica Sinica}
\bvolume{13}
\bpages{1211--1236}.
\end{barticle}
\endbibitem

\bibitem[\protect\citeauthoryear{Jang, Lee and Lee}{2010}]{Jang_2010}
\begin{barticle}[author]
\bauthor{\bsnm{Jang},~\bfnm{G.~H.}\binits{G.~H.}},
  \bauthor{\bsnm{Lee},~\bfnm{J.}\binits{J.}} \AND
  \bauthor{\bsnm{Lee},~\bfnm{S.}\binits{S.}}
(\byear{2010}).
\btitle{Posterior consistency of species sampling priors}.
\bjournal{Statistica Sinica}
\bvolume{20}
\bpages{581}.
\end{barticle}
\endbibitem

\bibitem[\protect\citeauthoryear{Knapp}{2005}]{Knapp_2005}
\begin{bbook}[author]
\bauthor{\bsnm{Knapp},~\bfnm{A.~W.}\binits{A.~W.}}
(\byear{2005}).
\btitle{Basic real analysis}.
\bpublisher{Birkh\"{a}user}.
\end{bbook}
\endbibitem

\bibitem[\protect\citeauthoryear{Lartillot and Philippe}{2004}]{Lartillot_2004}
\begin{barticle}[author]
\bauthor{\bsnm{Lartillot},~\bfnm{N.}\binits{N.}} \AND
  \bauthor{\bsnm{Philippe},~\bfnm{H.}\binits{H.}}
(\byear{2004}).
\btitle{A {B}ayesian mixture model for across-site heterogeneities in the
  amino-acid replacement process}.
\bjournal{Molecular Biology and Evolution}
\bvolume{21}
\bpages{1095--1109}.
\end{barticle}
\endbibitem

\bibitem[\protect\citeauthoryear{Leach{\'e} and Fujita}{2010}]{Leache_2010}
\begin{barticle}[author]
\bauthor{\bsnm{Leach{\'e}},~\bfnm{A.~D.}\binits{A.~D.}} \AND
  \bauthor{\bsnm{Fujita},~\bfnm{M.~K.}\binits{M.~K.}}
(\byear{2010}).
\btitle{Bayesian species delimitation in West African forest geckos
  (Hemidactylus fasciatus)}.
\bjournal{Proceedings of the Royal Society B: Biological Sciences}
\bvolume{277}
\bpages{3071--3077}.
\end{barticle}
\endbibitem

\bibitem[\protect\citeauthoryear{Lijoi, Mena and
  Pr{\"u}nster}{2007}]{Lijoi_2007}
\begin{barticle}[author]
\bauthor{\bsnm{Lijoi},~\bfnm{A.}\binits{A.}},
  \bauthor{\bsnm{Mena},~\bfnm{R.~H.}\binits{R.~H.}} \AND
  \bauthor{\bsnm{Pr{\"u}nster},~\bfnm{I.}\binits{I.}}
(\byear{2007}).
\btitle{Bayesian nonparametric estimation of the probability of discovering new
  species}.
\bjournal{Biometrika}
\bvolume{94}
\bpages{769--786}.
\end{barticle}
\endbibitem

\bibitem[\protect\citeauthoryear{Lo}{1984}]{Lo_1984}
\begin{barticle}[author]
\bauthor{\bsnm{Lo},~\bfnm{A.~Y.}\binits{A.~Y.}}
(\byear{1984}).
\btitle{On a class of Bayesian nonparametric estimates: I. Density estimates}.
\bjournal{The Annals of Statistics}
\bvolume{12}
\bpages{351--357}.
\end{barticle}
\endbibitem

\bibitem[\protect\citeauthoryear{Lorenzen, Arctander and
  Siegismund}{2006}]{Lorenzen_2006}
\begin{barticle}[author]
\bauthor{\bsnm{Lorenzen},~\bfnm{E.~D.}\binits{E.~D.}},
  \bauthor{\bsnm{Arctander},~\bfnm{P.}\binits{P.}} \AND
  \bauthor{\bsnm{Siegismund},~\bfnm{H.~R.}\binits{H.~R.}}
(\byear{2006}).
\btitle{Regional genetic structuring and evolutionary history of the impala
  Aepyceros melampus}.
\bjournal{Journal of Heredity}
\bvolume{97}
\bpages{119--132}.
\end{barticle}
\endbibitem

\bibitem[\protect\citeauthoryear{Medvedovic and
  Sivaganesan}{2002}]{Medvedovic_2002}
\begin{barticle}[author]
\bauthor{\bsnm{Medvedovic},~\bfnm{M.}\binits{M.}} \AND
  \bauthor{\bsnm{Sivaganesan},~\bfnm{S.}\binits{S.}}
(\byear{2002}).
\btitle{Bayesian infinite mixture model based clustering of gene expression
  profiles}.
\bjournal{Bioinformatics}
\bvolume{18}
\bpages{1194--1206}.
\end{barticle}
\endbibitem

\bibitem[\protect\citeauthoryear{Miller and Harrison}{2013a}]{Miller_2013}
\begin{bmisc}[author]
\bauthor{\bsnm{Miller},~\bfnm{J.~W.}\binits{J.~W.}} \AND
  \bauthor{\bsnm{Harrison},~\bfnm{M.~T.}\binits{M.~T.}}
(\byear{2013}a).
\btitle{A simple example of Dirichlet process mixture inconsistency for the
  number of components arXiv:1301.2708 [math.ST]}.
\end{bmisc}
\endbibitem

\bibitem[\protect\citeauthoryear{Miller and Harrison}{2013b}]{supplementary}
\begin{bmisc}[author]
\bauthor{\bsnm{Miller},~\bfnm{J.~W.}\binits{J.~W.}} \AND
  \bauthor{\bsnm{Harrison},~\bfnm{M.~T.}\binits{M.~T.}}
(\byear{2013}b).
\btitle{Supplement to ``Inconsistency of Pitman--Yor process mixtures for the
  number of components''}.
\end{bmisc}
\endbibitem

\bibitem[\protect\citeauthoryear{Nguyen}{2013}]{Nguyen_2013}
\begin{barticle}[author]
\bauthor{\bsnm{Nguyen},~\bfnm{X.~L.}\binits{X.~L.}}
(\byear{2013}).
\btitle{Convergence of latent mixing measures in finite and infinite mixture
  models}.
\bjournal{The Annals of Statistics}
\bvolume{41}
\bpages{370--400}.
\end{barticle}
\endbibitem

\bibitem[\protect\citeauthoryear{Nobile}{1994}]{Nobile_1994}
\begin{bphdthesis}[author]
\bauthor{\bsnm{Nobile},~\bfnm{A.}\binits{A.}}
(\byear{1994}).
\btitle{Bayesian Analysis of Finite Mixture Distributions.}
\btype{PhD thesis},
\bpublisher{Department of Statistics, Carnegie Mellon University},
\baddress{Pittsburgh, PA}.
\end{bphdthesis}
\endbibitem

\bibitem[\protect\citeauthoryear{Onogi, Nurimoto and Morita}{2011}]{Onogi_2011}
\begin{barticle}[author]
\bauthor{\bsnm{Onogi},~\bfnm{A.}\binits{A.}},
  \bauthor{\bsnm{Nurimoto},~\bfnm{M.}\binits{M.}} \AND
  \bauthor{\bsnm{Morita},~\bfnm{M.}\binits{M.}}
(\byear{2011}).
\btitle{Characterization of a {B}ayesian genetic clustering algorithm based on
  a {D}irichlet process prior and comparison among {B}ayesian clustering
  methods}.
\bjournal{BMC Bioinformatics}
\bvolume{12}
\bpages{263}.
\end{barticle}
\endbibitem

\bibitem[\protect\citeauthoryear{Otranto and Gallo}{2002}]{Otranto_2002}
\begin{barticle}[author]
\bauthor{\bsnm{Otranto},~\bfnm{E.}\binits{E.}} \AND
  \bauthor{\bsnm{Gallo},~\bfnm{G.~M.}\binits{G.~M.}}
(\byear{2002}).
\btitle{A nonparametric {B}ayesian approach to detect the number of regimes in
  {M}arkov switching models}.
\bjournal{Econometric Reviews}
\bvolume{21}
\bpages{477--496}.
\end{barticle}
\endbibitem

\bibitem[\protect\citeauthoryear{Pella and Masuda}{2006}]{Pella_2006}
\begin{barticle}[author]
\bauthor{\bsnm{Pella},~\bfnm{J.}\binits{J.}} \AND
  \bauthor{\bsnm{Masuda},~\bfnm{M.}\binits{M.}}
(\byear{2006}).
\btitle{The Gibbs and split--merge sampler for population mixture analysis from
  genetic data with incomplete baselines}.
\bjournal{Canadian Journal of Fisheries and Aquatic Sciences}
\bvolume{63}
\bpages{576--596}.
\end{barticle}
\endbibitem

\bibitem[\protect\citeauthoryear{Pitman}{2006}]{Pitman_2006}
\begin{bbook}[author]
\bauthor{\bsnm{Pitman},~\bfnm{J.}\binits{J.}}
(\byear{2006}).
\btitle{Combinatorial Stochastic Processes}.
\bpublisher{Springer-Verlag, Berlin}.
\end{bbook}
\endbibitem

\bibitem[\protect\citeauthoryear{Pitman and Yor}{1997}]{Pitman_1997}
\begin{barticle}[author]
\bauthor{\bsnm{Pitman},~\bfnm{J.}\binits{J.}} \AND
  \bauthor{\bsnm{Yor},~\bfnm{M.}\binits{M.}}
(\byear{1997}).
\btitle{The two-parameter {P}oisson-{D}irichlet distribution derived from a
  stable subordinator}.
\bjournal{The Annals of Probability}
\bvolume{25}
\bpages{855--900}.
\end{barticle}
\endbibitem

\bibitem[\protect\citeauthoryear{Pritchard, Stephens and
  Donnelly}{2000}]{Pritchard_2000}
\begin{barticle}[author]
\bauthor{\bsnm{Pritchard},~\bfnm{J.~K.}\binits{J.~K.}},
  \bauthor{\bsnm{Stephens},~\bfnm{M.}\binits{M.}} \AND
  \bauthor{\bsnm{Donnelly},~\bfnm{P.}\binits{P.}}
(\byear{2000}).
\btitle{Inference of population structure using multilocus genotype data}.
\bjournal{Genetics}
\bvolume{155}
\bpages{945--959}.
\end{barticle}
\endbibitem

\bibitem[\protect\citeauthoryear{Richards et~al.}{2009}]{Richards_2009}
\begin{barticle}[author]
\bauthor{\bsnm{Richards},~\bfnm{C.~M.}\binits{C.~M.}},
  \bauthor{\bsnm{Volk},~\bfnm{G.~M.}\binits{G.~M.}},
  \bauthor{\bsnm{Reilley},~\bfnm{A.~A.}\binits{A.~A.}},
  \bauthor{\bsnm{Henk},~\bfnm{A.~D.}\binits{A.~D.}},
  \bauthor{\bsnm{Lockwood},~\bfnm{D.~R.}\binits{D.~R.}},
  \bauthor{\bsnm{Reeves},~\bfnm{P.~A.}\binits{P.~A.}} \AND
  \bauthor{\bsnm{Forsline},~\bfnm{P.~L.}\binits{P.~L.}}
(\byear{2009}).
\btitle{Genetic diversity and population structure in Malus sieversii, a wild
  progenitor species of domesticated apple}.
\bjournal{Tree Genetics \& Genomes}
\bvolume{5}
\bpages{339--347}.
\end{barticle}
\endbibitem

\bibitem[\protect\citeauthoryear{Rockafellar}{1970}]{Rockafellar_1970}
\begin{bbook}[author]
\bauthor{\bsnm{Rockafellar},~\bfnm{R.~T.}\binits{R.~T.}}
(\byear{1970}).
\btitle{Convex {A}nalysis}.
\bpublisher{Princeton University Press}.
\end{bbook}
\endbibitem

\bibitem[\protect\citeauthoryear{Rousseau and Mengersen}{2011}]{Rousseau_2011}
\begin{barticle}[author]
\bauthor{\bsnm{Rousseau},~\bfnm{J.}\binits{J.}} \AND
  \bauthor{\bsnm{Mengersen},~\bfnm{K.}\binits{K.}}
(\byear{2011}).
\btitle{Asymptotic behaviour of the posterior distribution in overfitted
  mixture models}.
\bjournal{Journal of the Royal Statistical Society: Series B (Statistical
  Methodology)}
\bvolume{73}
\bpages{689--710}.
\end{barticle}
\endbibitem

\bibitem[\protect\citeauthoryear{Sapatinas}{1995}]{Sapatinas_1995}
\begin{barticle}[author]
\bauthor{\bsnm{Sapatinas},~\bfnm{T.}\binits{T.}}
(\byear{1995}).
\btitle{Identifiability of mixtures of power-series distributions and related
  characterizations}.
\bjournal{Annals of the Institute of Statistical Mathematics}
\bvolume{47}
\bpages{447--459}.
\end{barticle}
\endbibitem

\bibitem[\protect\citeauthoryear{Scricciolo}{2012}]{Scricciolo_2012}
\begin{barticle}[author]
\bauthor{\bsnm{Scricciolo},~\bfnm{C.}\binits{C.}}
(\byear{2012}).
\btitle{Adaptive Bayesian density estimation using Pitman-Yor or normalized
  inverse-Gaussian process kernel mixtures}.
\bjournal{arXiv preprint arXiv:1210.8094}.
\end{barticle}
\endbibitem

\bibitem[\protect\citeauthoryear{Teicher}{1963}]{Teicher_1963}
\begin{barticle}[author]
\bauthor{\bsnm{Teicher},~\bfnm{H.}\binits{H.}}
(\byear{1963}).
\btitle{Identifiability of Finite Mixtures}.
\bjournal{The Annals of Mathematical Statistics}
\bvolume{34}
\bpages{1265--1269}.
\end{barticle}
\endbibitem

\bibitem[\protect\citeauthoryear{West, M\"{u}ller and
  Escobar}{1994}]{West_1994}
\begin{bbook}[author]
\bauthor{\bsnm{West},~\bfnm{M.}\binits{M.}},
  \bauthor{\bsnm{M\"{u}ller},~\bfnm{P.}\binits{P.}} \AND
  \bauthor{\bsnm{Escobar},~\bfnm{M.~D.}\binits{M.~D.}}
(\byear{1994}).
\btitle{Hierarchical priors and mixture models, with application in regression
  and density estimation}.
\bpublisher{Institute of Statistics and Decision Sciences, Duke University}.
\end{bbook}
\endbibitem

\bibitem[\protect\citeauthoryear{Xing et~al.}{2006}]{Xing_2006}
\begin{binproceedings}[author]
\bauthor{\bsnm{Xing},~\bfnm{E.~P.}\binits{E.~P.}},
  \bauthor{\bsnm{Sohn},~\bfnm{K.~A.}\binits{K.~A.}},
  \bauthor{\bsnm{Jordan},~\bfnm{M.~I.}\binits{M.~I.}} \AND
  \bauthor{\bsnm{Teh},~\bfnm{Y.~W.}\binits{Y.~W.}}
(\byear{2006}).
\btitle{Bayesian multi-population haplotype inference via a hierarchical
  {D}irichlet process mixture}.
In \bbooktitle{Proceedings of the 23rd International Conference on Machine
  Learning}
\bpages{1049--1056}.
\end{binproceedings}
\endbibitem

\end{thebibliography}


\begin{thebibliography}{4}

\bibitem[\protect\citeauthoryear{Hoffmann-J{\o}rgensen}{1994}]{Hoffmann_1994}
\begin{bbook}[author]
\bauthor{\bsnm{Hoffmann-J{\o}rgensen},~\bfnm{J.}\binits{J.}}
(\byear{1994}).
\btitle{Probability with a view toward statistics}
\bvolume{2}.
\bpublisher{Chapman \& Hall}.
\end{bbook}
\endbibitem

\bibitem[\protect\citeauthoryear{Krantz}{1992}]{Krantz_1992}
\begin{bbook}[author]
\bauthor{\bsnm{Krantz},~\bfnm{S.~G.}\binits{S.~G.}}
(\byear{1992}).
\btitle{Function theory of several complex variables}.
\bpublisher{AMS Chelsea Publishing, Providence}.
\end{bbook}
\endbibitem

\bibitem[\protect\citeauthoryear{Miller and Harrison}{2013}]{main_document}
\begin{bmisc}[author]
\bauthor{\bsnm{Miller},~\bfnm{J.~W.}\binits{J.~W.}} \AND
  \bauthor{\bsnm{Harrison},~\bfnm{M.~T.}\binits{M.~T.}}
(\byear{2013}).
\btitle{Inconsistency of Pitman--Yor process mixtures for the number of
  components}.
\end{bmisc}
\endbibitem

\bibitem[\protect\citeauthoryear{Rockafellar}{1970}]{Rockafellar_1970}
\begin{bbook}[author]
\bauthor{\bsnm{Rockafellar},~\bfnm{R.~T.}\binits{R.~T.}}
(\byear{1970}).
\btitle{Convex {A}nalysis}.
\bpublisher{Princeton University Press}.
\end{bbook}
\endbibitem

\end{thebibliography}
\bibliographystyle{imsart-nameyear}

\end{document}